\numberwithin{equation}{section}
\def\e{\epsilon}
\def\R{\mathbb{R}}
\def\cD{\mathcal{D}}
\def\cA{\mathcal{A}}
\def\cX{\mathcal{X}}
\def\pc{\bar{*}}
\newcommand{\tcb}[1]{\textcolor{blue}{#1}}
\newtheorem{theorem}{Theorem}[section]
\newtheorem{lemma}{Lemma}[section]
\newtheorem{proposition}{Proposition}[section]
\newtheorem{remark}{Remark}[section]
\newtheorem{definition}{Definition}[section]
\title{On the completely positive kernels for nonuniform meshes}
\author[a]{Yuanyuan Feng\thanks{
E-mail: yyfeng@math.ecnu.edu.cn}}
\author[b]{Lei Li\thanks{E-mail: leili2010@sjtu.edu.cn}}
\affil[a]{School of  Mathematical Sciences, Shanghai Key Laboratory of PMMP, East China Normal University, Shanghai, 200241, P.R. China. }
\affil[b]{School of Mathematical Sciences, Institute of Natural Sciences, MOE-LSC, Shanghai Jiao Tong University, Shanghai, 200240, P.R.China.}
\date{}
\begin{document}

\maketitle

\begin{abstract}
The complete positivity, i.e., positivity of the resolvent kernels, for convolutional kernels is an important property for the positivity property and asymptotic behaviors of Volterra equations.  We inverstigate the discrete analogue of the complete positivity properties, especially for convolutional kernels on nonuniform meshes. Through an operation which we call pseudo-convolution, we introduce the complete positivity property for discrete kernels on nonuniform meshes and establish the criterion for the complete positivity. Lastly, we apply our theory to the L1 discretization of time fractional differential equations on nonuniform meshes.
\end{abstract}

\section{Introduction}

The time-delay memory is ubiquitous in physical models, which may be resulted from dimension reduction as in the generalized Langevin model for particles in heat bath (\cite{zwanzig73,zwanzig01,kouxie04,li2017fractional}) or may be resulted from viscoelasticity in soft matter (\cite{colemannoll1961,pd97}), or dielectric susceptibility for polarization \cite{stenzel05,cai13}, to name a few examples.
A basic model for the memory is the Volterra integral equations (see \cite{gripenberg1990volterra,miller1971smoothness,weis1975asymptotic,loy2014interconversion}). 
Let $\cX$ be a Banach space and $f: [0,\infty)\times \cX\to \cX$ be a given smooth function. The integral equation we consider in this work is
\begin{gather}\label{eq:vol}
u(t)=h(t)+\int_0^t a(t-s) f(s, u(s))\,ds,
\end{gather}
where $u: [0, T)\to \cX$ is the solution curve. Here, $a: [0,\infty)\to \R$ is the memory kernel.

Recall the standard one-sided convolution for two functions $u: [0,\infty)\to \R$ and $v: [0,\infty)\to \R$:
\begin{gather}
u*v(t)=\int_{[0, t]}u(s)v(t-s)\,ds.
\end{gather}
Such a convolution can be generalized to distributions whose supports
are on $[0,\infty)$ (see \cite[sections 2.1,2.2]{liliu18frac}). This convolution is commutative, associative. The identity is the Dirac delta $\delta$, defined by
\begin{gather}
\langle \delta, \varphi(\cdot)\rangle=\varphi(0), \forall \varphi\in C_c^{\infty}.
\end{gather}
With the convolution introduced, the Volterra integral equation \eqref{eq:vol} is then written as
\[
u(t)=h(t)+a*f(\cdot, u(\cdot)).
\]

Associated with the memory kernel $a$, the resolvent kernels considered in \cite{clement1979abstract,clement1981asymptotic,miller1968volterra} are crucial for inverstigating the properties of the equation, which are defined as follows.
\begin{definition}\label{def:resol}
Let $\lambda>0$. The resolvent kernels $r_{\lambda}$ and $s_{\lambda}$ for $a$ are defined respectively by
\begin{gather}\label{eq:resolformula}
\begin{split}
& r_{\lambda}+\lambda r_{\lambda}*a=\lambda a,\\
& s_{\lambda}+\lambda s_{\lambda}* a=1.
\end{split}
\end{gather}
\end{definition}
In \cite{clement1981asymptotic}, the complete positivity of the kernel $a$ is characterized by the nonnegativity of the resolvents $r_{\lambda}$ and $s_{\lambda}$. This is important for studying the positivity property and asymptotic behaviors of the solutions.

At the discrete level, it is desired that the complete positivity can be preserved. 
Besides, due to the memory kernels, especially some weakly singular kernels, the models often exhibit multi-scale behaviors \cite{cuesta2006convolution,tang2019energy,zhan2019complete}, which bring numerical challenge. The adaptive time-stepping is often adopted to address this issue \cite{mclean1996discretization,kopteva2019error,liao2019discrete,stynes2017error,li2019linearized,
 lyu2022symmetric}.  
 
Suppose that the computational time interval is $[0, T]$. Let $0=t_0<t_1<t_2<\cdots<t_N=T$ be the grid points.  We define
\begin{gather}
\tau_n:=t_n-t_{n-1}, \quad n\ge 1.
\end{gather}
Let $u_n$ be the numerical solution at $t_n$. By implicit discretization of the Volterra integral equation \eqref{eq:vol}, one may obtain
\begin{gather}\label{eq:integraldis}
u_n=h(t_n)+\sum_{j=1}^n a_{n-j}^n f(t_j, u_j).
\end{gather}
Here, $a_{n-j}^n$ is like the inegral of $a(t_n-s)$ on the interval $[t_{j-1}, t_j]$. 
 For the uniform meshes, the right hand side is the usual convolution for sequences and the concept of complete positivity is relatively easy to generalize. However, it remains open how this can be generalized to nonuniform meshes.
 
 In this work, we aim to address this question. In section \ref{sec:prelim}, we review the definition of the complete positivity
 and perform relevant discussions. In section \ref{sec:cpuniform}, we consider the complete positivity on uniform meshes.
 In section \ref{sec:pc}, we consider the pseudo-convolution which will then be used to study the complete positivity on nonuniform meshes  in section \ref{sec:cpnonuni}. Lastly in section \ref{sec:application}, we look at one illustrating example to see how our theory can be applied.

\section{The completely positive kernels}\label{sec:prelim}

In this section, we introduce some preliminaries and foundations for the discussion of this paper. 

\subsection{The complete positivity}\label{subsec:timecontinuous}

The resolvent kernels are useful to investigate the positivity and asymptotic properties of the Volterra type integral equations. The resolvent kernel $r_{\lambda}$ defined in \eqref{eq:resolformula} in fact satisfies
\begin{gather}\label{eq:interres}
(\delta +\lambda a)*(\delta-r_{\lambda})=\delta.
\end{gather}
It is also clearly that (see \cite{clement1981asymptotic})
\begin{gather}
s_{\lambda}=1*(\delta -r_{\lambda})=1-\int_0^t r_{\lambda}(\tau)\,d\tau.
\end{gather}
Formally, by the definition of $r_{\lambda}$, one has $\delta-r_{\lambda}=\lambda^{-1}r_{\lambda}*a^{(-1)}$, though the existence of the convolutional inverse $a^{(-1)}$ is not clear at this point. Noting that the complementary kernel (see Lemma \ref{lmm:contcp} below) $a^c=a^{(-1)}*1$, one finds that $s_{\lambda}=\lambda^{-1}r_{\lambda}*a^c$.

In \cite{clement1981asymptotic},  the so-called ``completely positive'' kernels were considered by Clement and Nohel.
\begin{definition}\label{def:cpcont}
Let $T>0$. A kernel $a\in L^1(0, T)$ is said to be completely positive if both the resolvent kernels $r_{\lambda}$ and $s_{\lambda}$ defined in Definition \ref{def:resol} are nonnegative for every $\lambda>0$.
\end{definition}

A sufficient condition is the following (see \cite{miller1968volterra}).
\begin{lemma}\label{lmm:continouslogconv}
If the kernel $a\in L^1(0, T)$ is nonnegative, nonincreasing and $t\mapsto \log a(t)$ is convex, then $a$ is completely positive.
\end{lemma}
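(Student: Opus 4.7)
The plan is to show the nonnegativity of $r_\lambda$ and $s_\lambda$ separately, leveraging the relation $s_\lambda(t)=1-\int_0^t r_\lambda(\tau)\,d\tau$ noted above. Since $s_\lambda(0)=1$, once $r_\lambda\ge 0$ is established, $s_\lambda$ is automatically nonincreasing from $1$; what remains is to verify that $s_\lambda$ does not cross zero on $[0,T]$.

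For the nonnegativity of $r_\lambda$, my first attempt is the Neumann series obtained by Picard iteration of $r_\lambda=\lambda a-\lambda(r_\lambda*a)$, namely $r_\lambda=\sum_{k\ge 1}(-1)^{k-1}\lambda^k a^{*k}$. Because this is alternating, one must use log-convexity of $a$ to derive pointwise control on consecutive iterated convolutions $a^{*k}(t)$ that force the partial sums to remain nonnegative. An alternative, and I suspect cleaner, route is by approximation: one approximates the nonnegative, nonincreasing, log-convex $a$ in $L^1$ by finite positive combinations of decaying exponentials $a_n(t)=\sum_i c_i^{(n)}e^{-\mu_i^{(n)}t}$ with $c_i^{(n)},\mu_i^{(n)}\ge 0$, for which the resolvent is explicitly computable via the Laplace transform (or partial fractions) and is manifestly nonnegative. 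A continuity argument for the map $a\mapsto r_\lambda$ under $L^1$ perturbations then transfers nonnegativity to the limit.

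To show $s_\lambda\ge 0$, I would argue by contradiction. Let $t^\star=\inf\{t>0:s_\lambda(t)=0\}$, assumed to lie in $(0,T]$. On $[0,t^\star]$ we already have $0\le s_\lambda\le 1$. Evaluating $s_\lambda+\lambda s_\lambda*a=1$ at $t=t^\star$ gives $\lambda\int_0^{t^\star}s_\lambda(s)\,a(t^\star-s)\,ds=1$; combining this with the nonincreasing property of $a$ and a sharper bound for $s_\lambda$ near $t^\star$ (obtained by differentiating the resolvent equation and using $-s_\lambda'=r_\lambda\ge 0$) should yield a contradiction.

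The main obstacle I anticipate is the sign analysis for $r_\lambda$. Translating log-convexity into a pointwise statement about an alternating convolution series is delicate, because log-convexity is strictly weaker than complete monotonicity; the latter would directly deliver a Stieltjes-type representation of $\hat a$ and trivialize the nonnegativity of $r_\lambda$, whereas log-convexity alone does not. The approximation route shifts the difficulty into approximation theory for log-convex, nonincreasing kernels by exponential sums, which appears more tractable and is the approach I would pursue first.
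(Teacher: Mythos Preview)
The paper does not actually prove this lemma; it is quoted from Miller \cite{miller1968volterra}, with the remark that Miller's hypothesis is the equivalent statement that $t\mapsto a(t)/a(t+h)$ is nonincreasing for every $h>0$. Miller's argument is a direct ``first zero'' analysis of $r_\lambda$: assuming $r_\lambda$ first touches zero at some $t_0$, one uses the identity $\lambda a(t_0)=\lambda\int_0^{t_0} r_\lambda(s)a(t_0-s)\,ds$ together with the ratio monotonicity to compare $r_\lambda(t)$ for $t$ near $t_0$ with a nonnegative quantity, yielding a contradiction. Neither of your two routes matches this.

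More importantly, your preferred route has a genuine gap that cannot be repaired. The $L^1(0,T)$-closure of positive finite sums $\sum_i c_i e^{-\mu_i t}$ with $c_i,\mu_i\ge 0$ is exactly the set of restrictions to $(0,T)$ of completely monotone functions on $(0,\infty)$: if $a_n=\int_0^\infty e^{-st}\,d\nu_n(s)\to a$ in $L^1(0,T)$, a tightness argument on the measures $\frac{1-e^{-sT}}{s}\,d\nu_n(s)$ shows that $a(t)=\int_0^\infty e^{-st}\,d\nu(s)$ a.e.\ for some positive $\nu$, hence $a$ is CM. But the log-convex, nonincreasing class is strictly larger, as you yourself note. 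For a concrete obstruction, take $a(t)=e^{(t-T)^2}$ on $(0,T)$: here $\log a$ is convex and $a$ is nonincreasing, yet $a$ is real-analytic and its unique analytic continuation to $(0,\infty)$ is eventually increasing, so $a$ is not the restriction of any CM function and therefore cannot be approximated in $L^1(0,T)$ by positive exponential sums. Thus the approximation strategy can at best recover the CM case, which the paper already treats separately in Proposition~\ref{pro:cmfunc}. Your alternating Neumann-series idea is closer in spirit to a workable argument, but the place to invest the log-convexity is not in comparing $a^{*k}$ with $a^{*(k+1)}$; it is in the comparison $a(t-s)a(t_0)\le a(t_0-s)a(t)$ for $t\ge t_0\ge s$, which is exactly the ratio-monotonicity form and is what drives Miller's first-zero argument. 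Your plan for $s_\lambda$ is also too vague as written; once $r_\lambda\ge 0$ is in hand, the standard way to get $s_\lambda\ge 0$ uses the identity $s_\lambda=\lambda^{-1}r_\lambda* a^c$ together with nonnegativity of the complementary kernel, not a separate first-zero argument.
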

In fact, the statement for the log-convexity of $a$ in  \cite{miller1968volterra} is that $t\mapsto a(t)/a(t+T)$ is nonincreasing for all $T>0$.

The following description of the complete positivity has been proved in \cite[Theorem 2.2]{clement1981asymptotic}. (The second claim has been mentioned in Remark (i) below the main result there.)
\begin{lemma}\label{lmm:contcp}
Let $T>0$. A kernel $a\in L^1(0, T)$ with $a\not\equiv 0$ is completely positive on $[0, T]$ if and only if there exists $\alpha\ge 0$ and $c \in L^1(0, T)$ nonnegative and nonincreasing  satisfying
\begin{gather}\label{eq:cpchar}
\alpha a+c*a=a*(\alpha \delta+c)=1_{t\ge 0}.
\end{gather}
Moreover, provided that $a$ is completely positive, $\alpha>0$ if and only if $a\in L^{\infty}(0, T)$ and in this case $a$ is in fact absolutely continuous on $[0, T]$.
\end{lemma}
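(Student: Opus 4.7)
The plan is to prove the two implications separately, relying on Laplace-transform analysis for the harder direction.

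\emph{Sufficiency.} Suppose $\alpha \ge 0$ and nonnegative nonincreasing $c \in L^1(0,T)$ satisfy $a * (\alpha \delta + c) = 1_{t \ge 0}$. Convolving the defining equation $r_\lambda + \lambda r_\lambda * a = \lambda a$ with $\alpha \delta + c$ and invoking the hypothesis yields
\begin{gather*}
\alpha r_\lambda(t) + (c * r_\lambda)(t) + \lambda \int_0^t r_\lambda(\tau)\,d\tau = \lambda.
\end{gather*}
Combined with $s_\lambda = 1 - \int_0^t r_\lambda$, this gives the identity $\lambda s_\lambda = \alpha r_\lambda + c * r_\lambda$; hence $s_\lambda \ge 0$ is a direct consequence of $r_\lambda \ge 0$. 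To establish the latter, I would pass to the Laplace transform: $\hat r_\lambda(s) = \lambda / (\lambda + \psi(s))$ with $\psi(s) = \alpha s + s \hat c(s)$. The nonnegativity and monotonicity of $c$ together with $\alpha \ge 0$ make $\psi$ a Bernstein function on $(0,\infty)$ (via the integration-by-parts representation $s\hat c(s) = c(0^+) - c(T)e^{-sT} - \int_0^T e^{-st}\,d\nu(t)$ with $\nu = -c'$ a nonnegative measure); then $1/(\lambda + \psi(s))$ is completely monotone in $s$ for each $\lambda > 0$, so $\hat r_\lambda$ is the Laplace transform of a nonnegative measure by Bernstein's theorem.

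\emph{Necessity.} Suppose $a$ is completely positive. Define the formal complementary kernel $a^c$ via $a * a^c = 1_{t \ge 0}$, so that $\hat a^c(s) = 1/(s \hat a(s))$. The target is the decomposition $a^c = \alpha \delta + c$ with the prescribed properties. Since $r_\lambda = -s_\lambda'$ (with $s_\lambda(0) = 1$), complete positivity translates into $s_\lambda$ being nonnegative and nonincreasing in $t$ for each $\lambda > 0$. Passing to the limit $\lambda \to \infty$ in $\lambda \hat s_\lambda(s) = 1/(s(1/\lambda + \hat a(s)))$ gives $\lambda \hat s_\lambda \to \hat a^c$ pointwise. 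Interpreting the family $\{\lambda s_\lambda\}_\lambda$ as a sequence of nonnegative nonincreasing functions of $t$, one extracts (via Helly selection) a weak limit of the form $\alpha \delta + c(t)\,dt$, where $\alpha$ captures the concentration of $\lambda s_\lambda$ at $t=0$ (since $\lambda s_\lambda(0) = \lambda \to \infty$) and $c$ inherits nonnegativity and monotonicity from the approximating sequence. In spirit this is a L\'evy--Khintchine / Bernstein-function decomposition of $\hat a^c$.

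\emph{Moreover part.} If $\alpha = 0$, then $a * c = 1$ on $(0,T]$; but for $a \in L^\infty(0,T)$ and $c \in L^1(0,T)$, dominated convergence gives $(a * c)(0^+) = 0 \ne 1$, a contradiction. Hence $a \in L^\infty$ forces $\alpha > 0$. Conversely, if $\alpha > 0$, the identity $a = (1 - a * c)/\alpha$ combined with the continuity of the convolution of two $L^1$ functions on $[0,T]$ gives $a$ continuous and bounded, hence in $L^\infty$. Absolute continuity then follows by differentiating this explicit representation and exploiting the nonincreasing structure of $c$ (so that $c'$ is a signed measure of bounded variation).

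The principal obstacle will be the necessity direction, specifically certifying that the weak limit of $\lambda s_\lambda$ splits as a point mass at $0$ plus a nonincreasing absolutely continuous part; this goes beyond pointwise nonnegativity and leans essentially on Bernstein-function representation theorems applied to $1/(s \hat a)$.
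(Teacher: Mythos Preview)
The paper does not prove this lemma. It is stated with the remark ``The following description of the complete positivity has been proved in \cite[Theorem 2.2]{clement1981asymptotic}'' and no proof is given; the second claim is attributed to a remark in the same reference. So there is nothing in the paper to compare your argument against.

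That said, a few comments on your sketch. In the sufficiency direction, your Laplace-transform route is sound in spirit: with $c$ nonnegative nonincreasing one does obtain that $\psi(s)=\alpha s+s\hat c(s)$ is a Bernstein function, and then $\lambda/(\lambda+\psi(s))=\int_0^\infty \lambda e^{-\lambda t}e^{-t\psi(s)}\,dt$ is completely monotone in $s$, giving $r_\lambda\ge 0$. You should, however, say a word about passing from the finite interval $[0,T]$ to $[0,\infty)$ (e.g.\ extend $c$ by the constant $c(T^-)$; the resolvent on $[0,T]$ is unaffected by the extension). In the ``moreover'' part, your claim that the convolution of two $L^1$ functions is continuous is false in general; the correct route is to first observe that complete positivity forces $a\ge 0$ (take $\lambda\to 0^+$ in $r_\lambda/\lambda$), whence $\alpha a\le 1$ directly gives $a\in L^\infty$, and only then is $a*c$ continuous. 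Your necessity argument is, as you yourself note, the weak point: the functions $\lambda s_\lambda$ are not uniformly bounded ($\lambda s_\lambda(0)=\lambda$), so Helly selection does not apply as stated; one really needs the Bernstein/L\'evy--Khintchine representation for the limit $1/(s\hat a(s))$, and extracting the $\alpha\delta$ atom plus an \emph{absolutely continuous} nonincreasing remainder (rather than a general nonincreasing measure) requires a further argument that you have not supplied.
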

 This result tells us that there is a complementary kernel $a^c=\alpha \delta+c$ for $a$. Cearly, $a^c$ is a  nonnegative and nonincreasing measure on $[0,T]$. It is nonincreasing in the sense that $a^c[t_0, t_0+\Delta t)\ge a^c[t_0+\Delta t, t_0+2\Delta t)$ for any $t_0\ge 0, \Delta t>0$. Here, $a^c[I]$ means the integral of the measure \tcb{$a^c$} on the interval $I$.

\subsection{Resolvents for completely monotone kernels}\label{subsec:cmresolvent}

In this subsection, we consider a special case, namely when the kernel is a completely monotone function \cite{widder41,ssv12}. 
A function $a: (0,\infty)\to \R$ is called completely monotone (CM) if $(-1)^na^{(n)}(t)\ge 0$ for all $n=0,1,2,\cdots$ and $t>0$.
It is known already that the completely monotone kernels are log-convex and thus completely positive by Bernstein theorem (see  \cite{clement1981asymptotic} and \cite{miller1968volterra}). Here, we show that the resolvent kernels are also completely monotone.

Motivated by a discrete analogue in \cite{liliu2018,li2021complete}, we expect that the convolution inverse of $\delta+\lambda a$ can be written as $\delta$ minus a completely monotone kernel. Hence, we expect that $r_{\lambda}$ is CM, which is much more than being nonnegative. 

\begin{proposition}\label{pro:cmfunc}
If $a$ is CM which is integrable on $(0,1)$ and not identically zero, then $r_{\lambda}$ is CM and is strictly positive. Moreover, $(\delta-r_{\lambda})*a=a-r_{\lambda}*a=r_{\lambda}/\lambda$ and 
\begin{gather}
s_{\lambda}=(\delta-r_{\lambda})*1=1-\int_0^t r_{\lambda}(\tau)d\tau
\end{gather}
 are both CM functions. Moreover, $1-\int_0^t r_{\lambda}(s)ds$ is strictly positive for all $t$.
\end{proposition}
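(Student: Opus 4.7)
The plan is to pass to Laplace transforms and exploit the well-known duality between Stieltjes functions and complete Bernstein functions (CBFs). By Bernstein's theorem, CM-ness of $a$ yields a representation $a(t)=\int_0^\infty e^{-st}\,d\mu(s)$ for a nontrivial positive Borel measure $\mu$ on $[0,\infty)$. Since $a$ is locally integrable, its Laplace transform $\tilde a(z)=\int_0^\infty (z+s)^{-1}\,d\mu(s)$ is well defined for $\mathrm{Re}\,z>0$ and is a nontrivial Stieltjes function, so $1/\tilde a$ is a CBF.

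Applying the Laplace transform to $r_{\lambda}+\lambda r_{\lambda}*a=\lambda a$ gives $\tilde r_{\lambda}=\lambda\tilde a/(1+\lambda\tilde a)$, equivalently $1/\tilde r_{\lambda}=1+1/(\lambda\tilde a)$. Since CBFs are closed under positive scalar multiplication and under addition of nonnegative constants, $1+1/(\lambda\tilde a)$ is a CBF, and the duality ``$f$ is a nontrivial CBF iff $1/f$ is a nontrivial Stieltjes function'' gives that $\tilde r_{\lambda}$ is Stieltjes; since $\tilde r_{\lambda}(z)\to 0$ as $z\to\infty$, the Stieltjes representation carries no constant term and inverse Bernstein yields that $r_{\lambda}$ is CM. The algebraic identity $(\delta-r_{\lambda})*a=a-r_{\lambda}*a=r_{\lambda}/\lambda$ is a direct rearrangement of the resolvent equation.

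For $s_{\lambda}$, I use $s_{\lambda}(0)=1$ and $s_{\lambda}'=-r_{\lambda}$ to compute $\tilde s_{\lambda}(z)=(1-\tilde r_{\lambda})/z=1/[z(1+\lambda\tilde a)]$. Now $z\tilde a(z)=\int_0^\infty\frac{z}{z+s}\,d\mu(s)$ is exactly in the canonical CBF form, so $z+\lambda z\tilde a=z(1+\lambda\tilde a)$ is a sum of CBFs, hence a CBF; by duality $\tilde s_{\lambda}$ is Stieltjes, and so $s_{\lambda}$ is CM. For the strict positivity claims: a nontrivial CM function is real-analytic and nonnegative on $(0,\infty)$, hence strictly positive there (a zero would be a local minimum forcing all derivatives to vanish and, by analyticity, $f\equiv 0$). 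The kernel $r_{\lambda}$ is nontrivial because $\tilde r_{\lambda}\not\equiv 0$, while $s_{\lambda}$ is nontrivial because $s_{\lambda}(0^+)=1$.

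The main obstacle will be assembling the Stieltjes/CBF machinery cleanly --- verifying that $z\tilde a(z)$ is a genuine CBF via its canonical representation, that the relevant classes are closed under the operations used, and that the Laplace transform together with its inversion is justified given that $a$ may be singular at $0$. If one prefers to avoid this machinery, an alternative is to combine Miller's log-convexity criterion (Lemma \ref{lmm:continouslogconv}) with a careful analysis of the Neumann series for $r_{\lambda}$, but the CBF route above is the more conceptual one.
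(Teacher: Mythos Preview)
Your argument is correct and lands in the same corner of the theory (complete Bernstein functions) as the paper, but the route is genuinely different. The paper does not work with the Laplace transform $\tilde a$ or with the Stieltjes/CBF duality; instead it regularizes to $a_\epsilon(t)=a(t)e^{-\epsilon t}$ so that the transform $F_{a_\epsilon}(z)=\int_0^\infty(1-e^{-zt})a_\epsilon(t)\,dt$ is a bona fide CBF, then checks \emph{by hand}, via the analytic characterization of Lemma~\ref{lmm:completebern} (sign of the imaginary part on the cut plane), that the explicit formula for $F_{r_\epsilon}$ is again a CBF, concludes that $r_\epsilon$ is CM, and finally passes to the limit $\epsilon\to 0$ using closure of CM under pointwise limits. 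For $s_\lambda$ the paper simply uses $s_\lambda'=-r_\lambda$ together with the total-mass bound $\int_0^\infty r_\lambda\le 1$. Your approach trades the regularization-and-limit step for standard structural facts about Stieltjes functions (reciprocals of nontrivial Stieltjes functions are CBFs, and conversely), which is cleaner and avoids the $\epsilon$-detour entirely; the price is that you rely on a slightly heavier off-the-shelf toolkit (the Stieltjes/CBF duality of \cite[Chap.~7]{ssv12}) rather than the single characterization in Lemma~\ref{lmm:completebern}. One small point you flagged yourself: to apply the Laplace transform to the resolvent equation you should note that $0\le r_\lambda\le \lambda a$ (from $r_\lambda=\lambda a-\lambda r_\lambda*a$ and $r_\lambda\ge 0$ by Lemma~\ref{lmm:continouslogconv}), so $\tilde r_\lambda$ exists wherever $\tilde a$ does; with that, the convolution theorem is justified and the rest goes through.
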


To prove this, we need some auxilliary tools.  The discrete case in \cite{li2021complete} is proved based on the generating functions, so an analogue of the generating functions for the continuous complete monotone functions is needed here. In particular, we consider the following transform of $a$ if $\int_{0}^{\infty} 1\wedge t a(t)dt<\infty$:
\begin{gather}
F_a(z)=\int_{(0,\infty)}(1-e^{-z t})a(t)\,dt, \quad z\in \mathbb{C}.
\end{gather}
This is related to the so-called complete Bernstein functions (see \cite[Chap. 6]{ssv12}).
A function $f$ is said to be a complete Bernstein function, if there exists a complete monotone function $m$ with $\int_{0}^{\infty} (1\wedge t) m(t)dt<\infty$ such that 
\[
f(\lambda)=a+b\lambda+\int_{(0,\infty)}(1-e^{-\lambda t})m(t)\,dt, \quad a\ge 0, b\ge 0.
\]
Note that $a, b$ and $m(\cdot)$ are uniquely determined.   

The following characterization of the complete Bernstein function from \cite[Theorem 6.2]{ssv12} is useful.
\begin{lemma}\label{lmm:completebern}
Suppose that $f(\cdot)$ is nonnegative on $(0,\infty)$. Then, $f(\cdot)$ is a complete Bernstein function if and only if $f$ has an analytic continuation to $\mathbb{C}\setminus (-\infty, 0]$ such that $\mathrm{Im}(z)\cdot \mathrm{Im}(f(z))\ge 0$ and $f(0+)=\lim_{\lambda\in 0^+, \lambda\in \R}f(\lambda)$ exists.
\end{lemma}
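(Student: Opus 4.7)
The plan is to prove the characterization by establishing the equivalent intermediate \emph{Stieltjes representation}
\[
f(\lambda) = a + b\lambda + \int_{(0,\infty)} \frac{\lambda}{\lambda+s}\, \sigma(ds),\qquad a,b\ge 0,
\]
with $\sigma$ a nonnegative Borel measure on $(0,\infty)$ satisfying $\int (1+s)^{-1}\sigma(ds)<\infty$. The easy direction goes from the complete Bernstein definition to this Stieltjes form, and from there to the analytic-continuation conditions; the hard direction reverses the process using the Nevanlinna--Pick representation of functions with nonnegative imaginary part.

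For the forward direction, I would start with $f(\lambda)=a+b\lambda+\int_{(0,\infty)}(1-e^{-\lambda t})m(t)\,dt$ and apply Bernstein's theorem to the completely monotone density: $m(t)=\int_{(0,\infty)} e^{-st}\mu(ds)$ for some nonnegative measure $\mu$ (absorbing any atom at $0$ into a linear term and adjusting $b$). Fubini, combined with the elementary identity $\int_0^\infty(1-e^{-\lambda t})e^{-st}\,dt = \lambda/(s(\lambda+s))$, turns this into the Stieltjes form above with $\sigma(ds)=\mu(ds)/s$. This representation makes the required properties manifest: the integrand $\lambda/(\lambda+s)$ extends analytically to $\mathbb{C}\setminus(-\infty,0]$ for each $s>0$, and a direct computation gives
\[
\operatorname{Im}\!\left(\frac{\lambda}{\lambda+s}\right) = \frac{s\,\operatorname{Im}(\lambda)}{|\lambda+s|^2},
\]
so $\operatorname{Im}(\lambda)\cdot\operatorname{Im} f(\lambda)\ge 0$; and monotone convergence yields $f(0+)=a\in[0,\infty)$.

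For the converse, the main obstacle is producing the Stieltjes representation from only the boundary hypotheses on $f$. Viewing $f$ as a holomorphic map from the upper half-plane to itself (a Pick/Herglotz function), I would invoke the Nevanlinna representation theorem to write
\[
f(z) = \alpha + \beta z + \int_{\mathbb{R}} \Bigl(\tfrac{1}{t-z}-\tfrac{t}{1+t^2}\Bigr)\, \nu(dt),
\]
with $\alpha\in\mathbb{R}$, $\beta\ge 0$, and $\nu$ a nonnegative measure with $\int(1+t^2)^{-1}\nu(dt)<\infty$. The key structural step is to show $\operatorname{supp}\nu\subset(-\infty,0]$: since $f$ continues analytically across $(0,\infty)$ and is real-valued there, Schwarz reflection forces the two boundary values of $f$ at points of $(0,\infty)$ to coincide, and the Stieltjes inversion formula then identifies $\nu|_{(0,\infty)}=0$. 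A change of variables $t=-s$ and regrouping of the subtraction term $t/(1+t^2)$ with the linear part then produces exactly the Stieltjes representation, with $a:=f(0+)\ge 0$ fixed by the hypothesis that this limit exists and by the nonnegativity of $f$ on $(0,\infty)$.

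Finally, I would convert the Stieltjes representation back to the complete Bernstein form by the dual identity
\[
\frac{\lambda}{\lambda+s} = \int_0^\infty (1-e^{-\lambda t})\, s e^{-st}\,dt,\qquad s>0,
\]
and Fubini, yielding $f(\lambda)=a+b\lambda+\int_0^\infty(1-e^{-\lambda t})m(t)\,dt$ with $m(t)=\int_{(0,\infty)} s e^{-st}\sigma(ds)$. By Bernstein's theorem applied to the measure $s\,\sigma(ds)$, this $m$ is completely monotone, and the required integrability $\int_0^\infty(1\wedge t)\,m(t)\,dt<\infty$ follows from $\int(1+s)^{-1}\sigma(ds)<\infty$. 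The crux of the argument, and the only nontrivial analytic input, is the Nevanlinna representation together with the support reduction for $\nu$; everything else is bookkeeping with Bernstein's theorem and Fubini. Since this is Theorem~6.2 of Schilling--Song--Vondra\v{c}ek, a short formal proof in the paper would just cite that reference, but the outline above reconstructs it.
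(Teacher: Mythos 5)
The paper does not prove this lemma at all --- it is quoted verbatim as \cite[Theorem 6.2]{ssv12} and used as a black box --- so there is no internal proof to compare against; your outline is a correct reconstruction of the standard argument in that reference (Stieltjes form via Bernstein's theorem and Fubini in one direction, Nevanlinna--Pick representation plus Stieltjes inversion to kill the measure on $(0,\infty)$ in the other). One small slip: an atom of $\mu$ at $s=0$ corresponds to a constant component of $m$, not a linear term of $f$, and it is already excluded by the integrability condition $\int_0^\infty(1\wedge t)\,m(t)\,dt<\infty$ rather than ``absorbed into $b$''; this does not affect the rest of the argument.
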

We will now use this result to prove Proposition \ref{pro:cmfunc}.

\begin{proof}[Proof of Proposition \ref{pro:cmfunc}]
For the notational convenience, we will omit the dependence of $r$ on $\lambda$. Namely,
$r$ means $r_{\lambda}$.

Since $a$ is completely monotone, it is then nonincreasing. Moreover, by the assumption that
$a$ is integrable on $(0, 1)$, the regularized kernel 
\[
a_{\epsilon}(t):=a(t)e^{-\epsilon t}
\]
is integrable on $(0,\infty)$ and is also CM (see  \cite[Theorem 1.6]{ssv12} for the fact that the CM property is closed under multiplication). 
Hence,
\[
F_{a_{\e}}(z):=\int_{(0,\infty)}(1-e^{-z t})a_{\epsilon}(t)\,dt
\]
is complete Bernstein, and it is clearly nonnegative for $z>0$. 
Moreover, $F_{a_{\e}}$ is not a constant by the assumption on $a$. Its imaginary part is a harmonic function on $\mathbb{C}\setminus (-\infty, 0]$, nonnegative for $\mathrm{Re}(z)\ge 0$ by Lemma \ref{lmm:completebern}, and is zero on $(0,\infty)$. We thus infer that its imaginary part is strictly positive in the upper half plane and strictly negative in the lower half plane. 

Consider the resolvent of $a_{\e}$ by
\[
r_{\e}+\lambda r_{\e}*a_{\e}=\lambda a_{\e}.
\]
Here, $r_{\e}$ means $r_{\lambda, \e}$ and \cite[Lemma 2]{miller1968volterra} implies $r_{\e}\ge 0$.  Denote $m_{a_{\e}}:=\int_0^{\infty}a_{\e}(t)\,dt$. Direct computation gives
\[
F_{r_{\e}}(z)=\frac{\lambda F_{a_{\e}}(z)}{(1+\lambda m_{a_{\e}})(1+\lambda m_{a_{\e}}-\lambda 
F_{a_{\e}}(z))}.
\]
By the properties of $F_{a_{\e}}$, one finds that $F_{r_{\e}}(s)$ is nonnegative on $(0,\infty)$ and is analytic on $\mathbb{C}\setminus (-\infty, 0]$.
Moreover, 
\[
\mathrm{Im}(F_{r_{\e}}(z))=\frac{\lambda \mathrm{Im}(F_{a_{\e}}(z))}{|1+\lambda m_{a_{\e}}-\lambda F_{a_{\e}}(z)|^2}.
\]
Hence, $\mathrm{Im}(z)\cdot \mathrm{Im}(F_{r_{\e}}(z))\ge 0$ also holds. Moreover, $F_{r_{\e}}(0+)$ clearly exists. Hence, $F_{r_{\e}}(z)$ is also a complete Bernstein function. 
By the uniqueness of the representation of the complete Bernstein function, $r_{\e}(t)$ is CM by Lemma \ref{lmm:completebern}.
Moreover, by \cite[Corollary 1.7]{ssv12}, 
\[
r(t)=\lim_{\epsilon\to 0} r_{\e}(t)
\]
is completely monotone.

Since $m_{r}=(\lambda m_a)/(1+\lambda m_a)$ where $m_a$ could be $\infty$, $s_{\lambda}=1-\int_0^t r_{\lambda}(s)\,ds\ge 0$. Moreover, $s'_{\lambda}=-r_{\lambda}$, which is the negation of a complete monotone function, so $s_{\lambda}$ is CM. Since it is CM and not identically zero, it is strict positive by the Bernstein theorem.
\end{proof}

\section{Completely positive kernels on uniform meshes}\label{sec:cpuniform}

In this section, we first investigate the discrete analogue of the complete positivity for uniform meshes. 
We need the convolution on (uniform) discrete meshes.  The usual convolution is defined by
\begin{gather}
(a*b)_n=\sum_{j=0}^n a_{n-j}b_j.
\end{gather}
It is clear that this operation is commutative,and $\delta_d=(1, 0, 0, \cdots)$ is the convolution identity. The convolutional inverse of $a$ is the sequence $b$ satisfying $a*b=b*a=\delta_d$ and one may denote $a^{(-1)}:=b$. Clearly, $a^{(-1)}$ exists if and only if $a_0\neq 0$. The complementary kernel $a^c$ is the one satisfying
\begin{gather}
a*a^c=a^c*a=(1,1,\cdots).
\end{gather}
It is clear that $a^c=a^{(-1)}*(1,1,\cdots)$.

Similar to Definitions \ref{def:resol} and \ref{def:cpcont}, one may define the following.
\begin{definition}
A sequence $a=(a_0, a_1, \cdots)$ with $a_0\neq 0$ is said to be completely positive if the resolvent sequence given by 
\[
r_{\lambda}+\lambda r_{\lambda}*a=\lambda a
\]
is nonnegative for all $\lambda>0$ and it holds that $\sum_{i=0}^n (r_{\lambda})_i\le 1$ for all $n$.
\end{definition}

With the fact that the complementary kernel $a^c$ satisfies $a^c=a^{(-1)}*(1,1,\cdots)$ and motivated by Lemma \ref{lmm:contcp},  one naturally considers the following conditions for the inverse $b=a^{(-1)}$:
\begin{gather}\label{eq:signproperty}
b_0>0; \quad b_j\le 0, \quad  j\ge 1;  \quad \sum_{j=0}^n b_j \ge 0, n\ge 1.
\end{gather}
Similar to Lemma \ref{lmm:contcp}, one actually has
\begin{theorem}\label{thm:discretecpuni}
The sequence $a$ with $a_0\neq 0$ is completely positive if and only if 
the convolutional inverse $b=a^{(-1)}$ satisfies \eqref{eq:signproperty}.
\end{theorem}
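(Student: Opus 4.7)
The plan is to derive a single clean identity relating $r_\lambda$ and $b := a^{(-1)}$, and then use it in both directions. Convolving the defining equation $r_\lambda + \lambda r_\lambda * a = \lambda a$ on the right with $b$ and using $a*b = \delta_d$ yields
\[
(b + \lambda \delta_d) * r_\lambda = \lambda \delta_d,
\]
equivalently $r_\lambda = \lambda (b + \lambda \delta_d)^{(-1)}$. This is the discrete analogue of the continuous formula $\delta - r_\lambda = (\delta + \lambda a)^{(-1)}$ appearing in Lemma \ref{lmm:contcp}, and everything else will be read off from it.

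For the \emph{if} direction, assume $b$ satisfies \eqref{eq:signproperty} and set $c := b + \lambda \delta_d$, so $c_0 > 0$, $c_j \le 0$ for $j \ge 1$, and the partial sums of $c$ are at least $\lambda$. A standard induction on the recursion $c_0 d_n = -\sum_{j=0}^{n-1} c_{n-j} d_j$ (where $d := c^{(-1)}$, $d_0 = 1/c_0$) shows $d_n \ge 0$, so $r_\lambda = \lambda d \ge 0$. For the upper bound on partial sums, I rewrite $\delta_d - r_\lambda = (c-\lambda \delta_d)* c^{(-1)} = b * c^{(-1)}$ and convolve with the constant sequence $(1,1,\ldots)$; writing $B_m := \sum_{j=0}^m b_j$, this gives
\[
1 - \sum_{j=0}^n (r_\lambda)_j = \sum_{k=0}^n B_{n-k}\, d_k,
\]
which is nonnegative since both factors are.

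For the \emph{only if} direction, the zeroth coordinate of the key identity gives $(r_\lambda)_0 = \lambda/(b_0+\lambda)$, whose nonnegativity for every $\lambda>0$ combined with $b_0 = 1/a_0 \neq 0$ forces $b_0 > 0$. For $n \ge 1$ the identity becomes the recursion $(b_0+\lambda)(r_\lambda)_n = -\sum_{j=1}^n b_j (r_\lambda)_{n-j}$; inducting on $n$ yields the large-$\lambda$ expansion
\[
(r_\lambda)_n = -\frac{b_n}{\lambda} + O\!\left(\frac{1}{\lambda^2}\right) \qquad (\lambda \to \infty),
\]
the leading term coming from $j=n$ (with $(r_\lambda)_0 \to 1$), while the terms with $1 \le j < n$ are of order $1/\lambda$ by the inductive hypothesis. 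Nonnegativity of $(r_\lambda)_n$ then forces $b_n \le 0$. Finally, summing the key identity over indices $0,\ldots,n$ yields
\[
\sum_{j=0}^n (r_\lambda)_j\, B_{n-j} = \lambda\!\left(1 - \sum_{j=0}^n (r_\lambda)_j\right) \ge 0,
\]
and letting $\lambda \to \infty$, with $(r_\lambda)_0 \to 1$ and $(r_\lambda)_j \to 0$ for $j \ge 1$, isolates $B_n \ge 0$.

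The one delicate step is the asymptotic expansion in the \emph{only if} direction: the induction must be arranged so that the $1/\lambda$ coefficient of $(r_\lambda)_n$ picks up exactly $-b_n$ and nothing more, which requires tracking that each $(r_\lambda)_k$ with $k \ge 1$ is genuinely $O(1/\lambda)$ before multiplying by $b_j$ and dividing by $b_0+\lambda$. Everything else is bookkeeping around the single identity $(b+\lambda\delta_d)*r_\lambda = \lambda\delta_d$.
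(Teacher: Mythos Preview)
Your proof is correct and follows essentially the same approach as the paper. Both arguments pivot on the single identity $r_\lambda^{(-1)}=\delta_d+\lambda^{-1}b$ (which you write as $(b+\lambda\delta_d)*r_\lambda=\lambda\delta_d$), use the induction/M-matrix mechanism to get $r_\lambda\ge 0$ in the ``if'' direction, and extract the sign conditions on $b$ in the ``only if'' direction via the elementwise limit $b=\lim_{\lambda\to\infty}\lambda(\delta_d-r_\lambda)$; your asymptotic expansion is just a more explicit justification of that limit. For the partial-sum bound $\sum_j(r_\lambda)_j\le 1$, your argument coincides with the paper's second (``another way to see this'') argument via $\delta_d-r_\lambda=\lambda^{-1}b*r_\lambda$ convolved with $(1,1,\dots)$; the paper's primary argument for that step is a direct estimate sending $n\to\infty$ with $m$ fixed, which you bypass.
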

\begin{proof}
Consider the ``$\Rightarrow$'' direction. By definition, one has $(r_{\lambda})_0=\lambda a_0/(1+\lambda a_0)$ which exists as long as $1+\lambda a_0\neq 0$. This clearly holds for $\lambda$ large enough since $a_0\neq 0$. Since $(r_{\lambda})_0\ge 0$, we infer that $a_0>0$ and thus $b_0=a_0^{-1}>0$.  

Since $a_0>0$, $r_{\lambda}$ is invertible for $\lambda>0$. Moreover, it holds that
\[
r_{\lambda}^{(-1)}=\delta_d+\lambda^{-1}a^{(-1)} \Longrightarrow r_{\lambda}=(\delta_d+\lambda^{-1}a^{(-1)})^{(-1)}.
\]
Then, in the elementwise limit sense, one has
\begin{gather}\label{eq:ainvtor}
a^{(-1)}=\lim_{\lambda\to\infty}\lambda(\delta_d-r_{\lambda}).
\end{gather}
Since $r_{\lambda}\ge 0$, one then finds that $b_j\le 0$ for $j\ge 1$. 

Since $\sum_{i=0}^n\lambda (\delta_d-r_{\lambda})_i\ge 0$ due to the completely positive requirement, then by \eqref{eq:ainvtor},
\[
\sum_{j=0}^n b_j=\lim_{\lambda\to\infty} \lambda \sum_{i=0}^n (\delta_d-r_{\lambda})_i \ge 0.
\]

For the ``$\Leftarrow$'' direction, since $a_0b_0=1$ and
\[
a_n b_0=-\sum_{j=1}^n a_{n-j}b_j, \quad n\ge 1.
\]
It is straightforward to see that $a_0>0$ and $a\ge 0$ by induction. 
Since $r_{\lambda}^{(-1)}=\delta_d+\lambda^{-1}a^{(-1)}$.then the first entry is 
positive and other entries are nonnegative as well. Similar argument shows that
 $r_{\lambda}\ge 0$.
 
Note that $a^c$ is nonnegative due to the third condition in \eqref{eq:signproperty}. 
By the fact that $a*a^c=(1,1,\cdots)$, one then has $b_0a_n\le 1=b_0a_0$. This implies that $a_n\le a_0$ for all $n\ge 1$  .
By the definition of $r_{\lambda}$, one then has for $n\ge m$ that
\[
\lambda \sum_{i=0}^na_i = \sum_{i=0}^n(r_{\lambda})_i
+\lambda \sum_{j=0}^n\left(\sum_{i=0}^{n-j}a_i \right)(r_{\lambda})_j
\ge \sum_{i=0}^m (r_{\lambda})_i \left(1+\lambda \sum_{i=0}^{n-m}a_i\right).
\]
If $\sum_{i=0}^{\infty}a_i<\infty$, $\sum_{i=n-m+1}^n a_i\to 0$. Otherwise, 
$\sum_{i=n-m+1}^n a_i \le m a_0$. In both cases, fixing $m$ and sending $n\to\infty$, one has $\sum_{i=n-m+1}^n a_i/\sum_{i=0}^{n-m}a_i \to 0$ and thus $\sum_{i=0}^m (r_{\lambda})_i\le 1$. Another way to see this is that $\delta_d-r_{\lambda}=\lambda^{-1}b*r_{\lambda}$ so that $1-\sum_{i=0}^n (r_{\lambda})_i=\lambda^{-1}r_{\lambda}*a^c$. The conclusion also follows.
\end{proof}

\section{Pseudo-convolution}\label{sec:pc}

To generalize the complete positivity to nonuniform meshes, we consider an operation which we call ``pseudo-convolution'' for two arbitrary 2D lower triangular arrays. This operation is motivated by the works by Liao. et al \cite{liao2020positive}, where they used the so-called ``discrete orthogonal convolution (DOC)'' kernels to perform some accurate analysis of ``backward differentiation formula'' (BDF) schemes and to investigate the positive-definiteness on nonuniform grids. A related tool is the ``discrete complementary convolution (DCC)'' kernel introduced in \cite{liao2018sharp,liao2019discrete}. These kernels turn out to be convenient tools to address the discretization of convolution operators on nonuniform grid. Here, we view this operation as a mapping that sends two array kernels into a new array kernel, which is an analogue of the standard convolution. Since this operation is very similar to the matrix multiplications, many properties about the DOC and DCC kernels can be understood naturally using this viewpoint.

We arrange the kernel $\{a_{n-j}^n\}$ into a lower triangular array $A$ of the following form
\begin{gather}\label{eq:arraykernel}
A=\begin{bmatrix}
a_0^{1} &  &  &  &  \\
a_1^{2}& a_0^{2} & & & \\
\cdots & \vdots & \vdots &  &  \\
a_{n-1}^{n} &  \cdots & a_1^{n} & a_0^{n} &\\
\cdots & \vdots & \vdots &   & \vdots\\
\end{bmatrix}.
\end{gather}
Denote $K$ to be the set of such kernels.

\begin{definition}
We define the pseudo-convolution $\pc: K\times K\to K$, $C=A\bar{*}B$,
by
\begin{gather}\label{eq:convnonuni}
c_{k}^n=\sum_{j=0}^k a_{k-j}^n b_j^{n+j-k}, \quad \text{or}\quad c_{n-k}^n=\sum_{j=k}^n a_{n-j}^n b_{j-k}^j.
\end{gather}
\end{definition}
The pseudo-convolution is illustrated in Figure \ref{fig:pconv}. 
To compute $c_2^4$, we take the subvector from $a_2^4$ to the rightmost in the row where $a_2^4$ lies, take the subvector from $b_2^4$ to the upmost element in the column where $b_2^4$ lies, and then take the dot product between these two subvectors, which will be $c_2^4$.
\begin{figure}[!htbp]
\centering  
\includegraphics[width=0.8\textwidth]{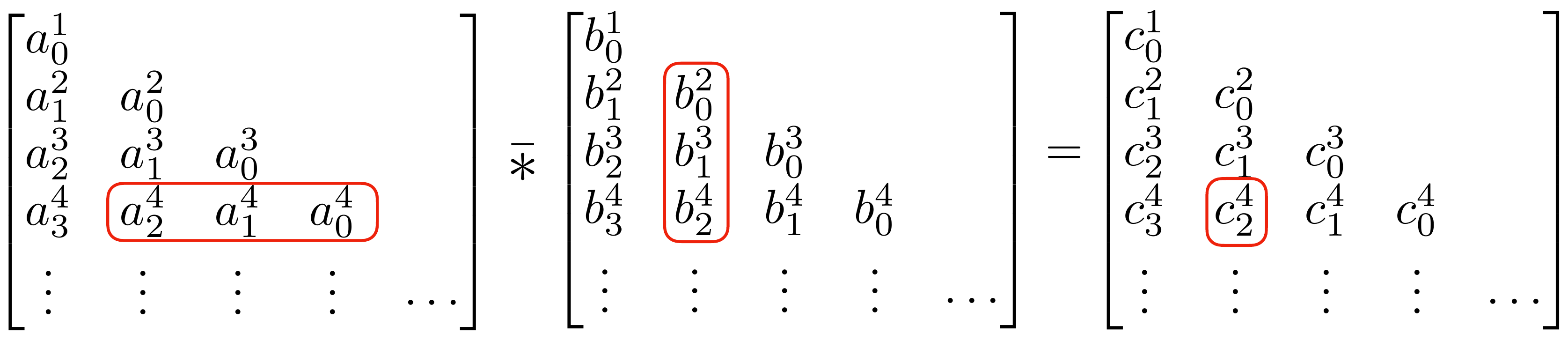}  
\caption{Illustration for the  pseudo-convolution.} 
 \label{fig:pconv} 
\end{figure}

We remark that such an array \eqref{eq:arraykernel} has been introduced already in \cite[Lemma 2.1]{liao2020positive} and the operation \eqref{eq:convnonuni} has appeared in  \cite{liao2019discrete,liao2021analysis} as well for the definition of the so-called DOC kernel there.  Moving one step further to make the operation for two arbitrary array kernels, it soon becomes a useful tool for nonuniform meshes.

\begin{remark}\label{rmk:conv1}
The pseudo-convolution here is defined for infinite arrays. By the definition, the convolution for $n\le N$ does not depend on the data with $n>N$. Hence, though the discussion here is for infinite arrays, the results can apply to array kernels with finite data.
\end{remark}

We introduce the following identity kernel
\begin{gather}\label{eq:I}
I=
\begin{bmatrix}
1 &  &    &  &\\
& 1 &  &  &\\
 &    & \vdots & & \\
& &    &1 & \\
 &  &    & & \vdots\\
\end{bmatrix}.
\end{gather}
Namely, $I_{n-k}^{n}=\delta_{nk}$.

This pseudo-convolution in general is not commutative. However, it has other desired basic properties as listed below. 
\begin{lemma}
The so-defined pseudo-convolution satisfies the following properties
\begin{enumerate}[(i)]
\item $(A+B)\pc C=A\pc C+B\pc C$, $A\pc(B+C)=A\pc B+A\pc C$; 
\item The associative law holds $(A\pc B)\pc C=A\pc(B\pc C)$;
\item $I\pc A=A$, $A\pc I=A$.
\end{enumerate}
\end{lemma}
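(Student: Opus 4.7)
The plan is to verify all three properties by direct index manipulation. It is worth noting at the outset that under the correspondence $a_k^n \mapsto (M_A)_{n,\,n-k}$, the pseudo-convolution $A\pc B$ is nothing but the matrix product of the infinite lower-triangular matrices $M_A M_B$, and the array $I$ of \eqref{eq:I} corresponds to the identity matrix. In principle the three properties therefore inherit from matrix algebra, but since the rest of the paper works in array notation, I would give short direct arguments.

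Part (i) follows immediately from the definition \eqref{eq:convnonuni}: the expression for $c_k^n$ is a finite sum, each term bilinear in the two factors, so linearity of summation in either argument gives both distributive laws in one line. Part (iii) uses the observation that $I_l^n = \delta_{l,0}$ for all admissible $n,l$. Plugging into the definition, only $j=k$ survives in $(I\pc A)_k^n = \sum_{j=0}^k I_{k-j}^n A_j^{n+j-k}$, yielding $A_k^n$, and only $j=0$ survives in $(A\pc I)_k^n = \sum_{j=0}^k A_{k-j}^n I_j^{n+j-k}$, again yielding $A_k^n$.

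Part (ii) is the main content. The plan is to show that both iterated pseudo-convolutions equal the same symmetric triple sum
\[
S_k^n \;=\; \sum_{\substack{p,q,r\ge 0\\ p+q+r=k}} A_p^n\, B_q^{\,n-p}\, C_r^{\,n-p-q}.
\]
For the left grouping, expanding first the outer and then the inner pseudo-convolution gives
\[
((A\pc B)\pc C)_k^n \;=\; \sum_{j=0}^k \sum_{i=0}^{k-j} A_{k-j-i}^n\, B_i^{\,n+i-k+j}\, C_j^{\,n+j-k}.
\]
Substituting $p=k-j-i$, $q=i$, $r=j$, the constraint $i+j\le k$ becomes $p+q+r=k$ with $p,q,r\ge 0$; the superscripts collapse using $n+j-k = n-p-q$ and $n+i-k+j = n-p$, reproducing $S_k^n$ term by term. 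For the right grouping, the analogous expansion
\[
(A\pc(B\pc C))_k^n \;=\; \sum_{j=0}^k \sum_{i=0}^{j} A_{k-j}^n\, B_{j-i}^{\,n+j-k}\, C_i^{\,n+i-k}
\]
with substitution $p=k-j$, $q=j-i$, $r=i$ again gives $p+q+r=k$ and reduces the superscripts to $n-p$ and $n-p-q$, matching $S_k^n$.

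The real work is concentrated in (ii), and the only mild obstacle is bookkeeping: one must check that the ``shifted'' superscripts appearing after the two different orders of grouping both simplify to $n-p$ and $n-p-q$ under the respective changes of variables. Once this is observed, the triple-sum form makes associativity transparent.
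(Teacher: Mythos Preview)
Your proof is correct and follows essentially the same approach as the paper: direct index manipulation, with (i) and (iii) dismissed as trivial and (ii) handled by expanding the double sums. The only cosmetic difference is that the paper works in the $(n-k)$-indexed form and simply interchanges the order of summation to recognize $(A\pc B)_{n-\ell}^n$, whereas you work in the $k$-indexed form and route both sides through the common triple sum $\sum_{p+q+r=k} A_p^n B_q^{n-p} C_r^{n-p-q}$; the two are the same computation in different packaging.
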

\begin{proof}
We only have to verify the second property while the others are trivial. By definition
\begin{multline*}
[A\pc(B\pc C)]_{n-k}^n=\sum_{j=k}^n a_{n-j}^n (B\pc C)_{j-k}^j
=\sum_{j=k}^n a_{n-j}^n \sum_{\ell=k}^j b_{j-\ell}^j c_{\ell-k}^{\ell}\\
=\sum_{\ell=k}^n (\sum_{j=\ell}^n a_{n-j}^nb_{j-\ell}^j) c_{\ell-k}^{\ell}
=\sum_{\ell=k}^n (A\pc B)_{n-\ell}^n c_{\ell-k}^{\ell}=[(A\pc B)\pc C]_{n-k}^n.
\end{multline*}
\end{proof}

The following lemma explains why we call it pseudo-convolution. The verification is straightforward and we omit the proof.
\begin{lemma}
If $a_{j}^n=a_{j}$ and $b_{j}^n=b_j$ are both  independent of $n$, then  it reduces to the usual convolution.
\end{lemma}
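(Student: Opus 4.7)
The plan is to directly substitute the $n$-independence hypothesis into the definition \eqref{eq:convnonuni} of the pseudo-convolution and observe that the resulting expression is exactly the formula for the usual one-sided convolution of sequences.

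Concretely, I would start from the first form in \eqref{eq:convnonuni}, namely $c_k^n=\sum_{j=0}^k a_{k-j}^n b_j^{n+j-k}$. By the hypothesis $a_{k-j}^n=a_{k-j}$, the first factor does not depend on $n$, and by $b_j^{n+j-k}=b_j$, the second factor does not depend on the superscript $n+j-k$ either. Thus
\begin{equation*}
c_k^n=\sum_{j=0}^k a_{k-j}\,b_j,
\end{equation*}
which is independent of $n$. Writing $c_k:=c_k^n$, this is precisely the standard discrete convolution $(a*b)_k$ as introduced at the beginning of Section \ref{sec:cpuniform}. I would then remark that the array $C=A\pc B$ therefore also has entries depending only on the offset $k$, so it lies in the image of the embedding of ordinary sequences into $K$, closing the reduction.

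There is really no obstacle here; the statement is a direct consequence of unwinding the definition once the superscripts in the two factors are replaced by constants. The only thing worth pointing out, which I would mention for completeness, is that both superscripts $n$ and $n+j-k$ must drop out simultaneously, and this is exactly why the definition \eqref{eq:convnonuni} was set up with the shift $n+j-k$ in the second index: it is the unique shift that makes the identity hold for every choice of $n$ and $k$. I would therefore keep the write-up to one short paragraph, exactly as the authors do when they say ``the verification is straightforward and we omit the proof.''
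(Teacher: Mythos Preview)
Your proposal is correct and is exactly the straightforward verification the paper has in mind when it omits the proof: substitute $a_{k-j}^n=a_{k-j}$ and $b_j^{n+j-k}=b_j$ into \eqref{eq:convnonuni} to obtain $c_k^n=\sum_{j=0}^k a_{k-j}b_j=(a*b)_k$. There is nothing to add.
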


Clearly, the kernel $I$ can be regarded as the identity.  Next, we introduce the inverse.
\begin{definition}
 $B$ is an inverse of $A$, if $A\pc B=I$.
\end{definition}

The following is a basic property regarding the inverse.
\begin{proposition}\label{pro:inv}
If $a_0^n\neq 0$ for all $n$, then $A$ has a unique inverse $B$ such that $A\pc B=I$. Moreover, it holds that $B\pc A=I$.
\end{proposition}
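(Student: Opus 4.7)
The plan is to construct a right inverse $B$ satisfying $A \pc B = I$ by a direct recursion, construct a left inverse $B'$ satisfying $B' \pc A = I$ by the analogous recursion, and then use associativity together with $I$ being a two-sided unit to identify $B' = B$, from which the moreover statement follows.

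For the right inverse, I would write the condition $(A \pc B)_k^n = \delta_{k0}$ using the defining formula and isolate the $j=k$ term:
\[
a_0^n\, b_k^n + \sum_{j=0}^{k-1} a_{k-j}^n\, b_j^{n+j-k} = \delta_{k0}.
\]
The case $k=0$ forces $b_0^n = 1/a_0^n$, which is well defined since $a_0^n \ne 0$. For $k \ge 1$ the equation determines $b_k^n$ as a unique linear combination of entries $b_j^{n+j-k}$ with $j \le k-1$, all of which have upper index strictly less than $n$. Induction on the upper index $n$ then produces $B \in K$ and simultaneously proves uniqueness of the right inverse.

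For a left inverse $B'$ the analogous rearrangement of $(B' \pc A)_k^n = \delta_{k0}$, now isolating the $j=0$ term, yields
\[
(b')^n_k\, a_0^{n-k} + \sum_{j=1}^{k} (b')^n_{k-j}\, a_j^{n+j-k} = \delta_{k0}.
\]
Now the pivot is $a_0^{n-k}$, which is again nonzero by hypothesis, and the recursion runs within a single row $n$ in the lower index $k$: once $(b')^n_0,\ldots,(b')^n_{k-1}$ are known, $(b')^n_k$ is determined uniquely. An induction on $k$ inside each row then produces $B'$ and proves its uniqueness.

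With both one-sided inverses in hand, associativity of $\pc$ (from the preceding lemma) and the fact that $I$ is a two-sided unit give
\[
B' = B' \pc I = B' \pc (A \pc B) = (B' \pc A) \pc B = I \pc B = B,
\]
so $B \pc A = B' \pc A = I$, as claimed. The construction amounts to inverting an infinite lower triangular array with nonzero diagonal; the only real subtlety is to set up two different recursions (one stepping in the upper index for the right inverse, one stepping in the lower index within a row for the left inverse) because $\pc$ is not commutative. I do not expect any serious obstacle beyond bookkeeping with the indices.
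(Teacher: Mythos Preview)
Your proposal is correct. The construction of the right inverse $B$ via recursion on the upper index $n$ matches the paper's argument exactly, and your index bookkeeping is sound: the entries $b_j^{n+j-k}$ needed for $b_k^n$ all have upper index at most $n-1$.

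Where you depart from the paper is in the proof of the ``moreover'' statement $B\pc A=I$. The paper does not construct a separate left inverse; instead it verifies the identity $\sum_{j=k}^n b_{n-j}^n a_{j-k}^j=\delta_{nk}$ directly by induction on $n$, substituting the recursion that defines $b_{n-j}^n$ back into the sum and using the induction hypothesis to collapse the inner sum. Your route---build a left inverse $B'$ by the row-internal recursion on $k$ with pivot $a_0^{n-k}$, then invoke associativity and the two-sided unit $I$ to force $B'=B$---is the standard monoid-theoretic argument and is arguably cleaner: it separates the existence of the two one-sided inverses from their identification and avoids the nested double sum the paper has to manipulate. The paper's approach, on the other hand, is slightly more economical in that it never introduces a second object $B'$, and it makes explicit how the right-inverse recursion for $B$ already encodes enough to force the left-inverse relation. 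Both arguments rely on the same preceding lemma (associativity and the unit property of $I$), so neither has a logical advantage.
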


\begin{proof}
By the definition,
\begin{gather}
\sum_{j=k}^n a_{n-j}^n b_{j-k}^j=\delta_{nk}.
\end{gather}
This holds if and only if
\[
b_0^n=1/a_0^n, \quad b_{n-k}^n=-(a_0^n)^{-1}\sum_{j=k}^{n-1} a_{n-j}^n b_{j-k}^j, 1\le k \le n-1.
\]
Hence, $B$ is uniquely solved for each $n=1, 2, \cdots$. This verifies the first claim.

Now, we verify the second claim, namely
\begin{gather}
\sum_{j=k}^n b_{n-j}^n a_{j-k}^j=\delta_{nk}, \quad 1\le k \le n.
\end{gather}
For $n=1$, this holds clearly. We now do induction.
Suppose that this holds for $n \le m$ ($m \ge 1$). Consider $n=m+1$. 
Clearly, when $k=n$, this holds. For $1\le k \le n-1$,
\begin{gather}
\begin{split}
&\sum_{j=k}^n b_{n-j}^n a_{j-k}^j=b_0^na_{n-k}^n
+\sum_{j=k}^{n-1} a_{j-k}^j[-(a_0^n)^{-1}\sum_{\ell=j}^{n-1} a_{n-\ell}^n b_{\ell-j}^\ell] \\
&=b_0^na_{n-k}^n
-(a_0^n)^{-1} \sum_{\ell=k}^{n-1}a_{n-\ell}^n \sum_{j=k}^{\ell}  b_{\ell-j}^\ell
a_{j-k}^j =b_0^na_{n-k}^n
-(a_0^n)^{-1} \sum_{\ell=k}^{n-1}a_{n-\ell}^n\delta_{\ell k}=0.
\end{split}
\end{gather}
The second last equality is by induction hypothesis.
Hence, the desired claim holds by induction.
\end{proof}
The kernel $B$ is actually the ROC kernel defined in \cite{liao2021analysis}.  Since it is both left and right inverse, we will simply denote
\[
A^{(-1)}:=B, \quad \text{such that}~B\pc A=A\pc B=I.
\]

The following fact, though straightforward, is useful, which is reminiscent of the M-matrices (see \cite{plemmons1977m}).
\begin{lemma}\label{lmm:signofentry}
Let $B$ be the inverse of $A$. If $B$ has positive diagonal elements and nonpositive off-diagonal elements, then $A$ has nonnegative elements and the entries on the diagonal are positive. 
\end{lemma}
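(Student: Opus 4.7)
The plan is to leverage the identity $A \bar* B = I$ (together with the recursion it yields, as already used in the proof of Proposition \ref{pro:inv}) to express each entry of $A$ in terms of entries of $B$ and lower-index entries of $A$, and then run an induction on the index $m = n - k$, i.e., on the distance from the diagonal.

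Concretely, from $A \bar* B = I$ one has $\sum_{j=k}^n a_{n-j}^n b_{j-k}^j = \delta_{nk}$. Setting $n = k$ gives $a_0^n b_0^n = 1$, and since $b_0^n > 0$ by hypothesis, we immediately conclude $a_0^n = 1/b_0^n > 0$. This handles the diagonal and provides the base case $m = 0$ of the induction. For $k < n$, isolating the $j = k$ term gives the recurrence
\begin{gather*}
a_{n-k}^n\, b_0^k = -\sum_{j=k+1}^n a_{n-j}^n\, b_{j-k}^j,
\end{gather*}
or equivalently, with $m = n-k$ and $\ell = n - j$,
\begin{gather*}
a_m^n\, b_0^{n-m} = -\sum_{\ell=0}^{m-1} a_\ell^n\, b_{m-\ell}^{n-\ell}.
\end{gather*}
The point is that this expresses $a_m^n$ in terms of $a_\ell^n$ for $\ell < m$, which is exactly what an induction on $m$ needs.

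For the inductive step, assume $a_\ell^p \ge 0$ for every $\ell < m$ and every admissible row $p$. On the right-hand side above, each $b_{m-\ell}^{n-\ell}$ with $1 \le m-\ell \le m$ is an off-diagonal entry of $B$, hence $\le 0$ by hypothesis, while $a_\ell^n \ge 0$ by induction. Thus each summand is $\le 0$, the overall sum with the minus sign is $\ge 0$, and dividing by the positive quantity $b_0^{n-m}$ yields $a_m^n \ge 0$. This completes the induction and delivers both nonnegativity of all entries of $A$ and positivity of the diagonal.

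I do not anticipate a real obstacle here; the only thing to be careful about is bookkeeping of the indices, specifically checking that the recursion obtained from $A \bar* B = I$ really reduces the "diagonal distance" $m$ at each step (so that the induction is well-founded) and that every $b$-coefficient appearing on the right-hand side is off-diagonal except for the $b_0^{n-m}$ that we divide out. Once those two points are verified the argument is a one-line induction, and the statement is essentially the $\bar*$-analogue of the classical fact that the inverse of an M-matrix is nonnegative, as the reference to \cite{plemmons1977m} in the statement suggests.
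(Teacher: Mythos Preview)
Your proposal is correct and follows essentially the same approach as the paper: both use the identity $\sum_{j=k}^n a_{n-j}^n b_{j-k}^j=\delta_{nk}$ from $A\bar*B=I$, read off $a_0^n=1/b_0^n>0$, and then run an induction on the distance from the diagonal to show $a_{n-k}^n b_0^k=-\sum_{j=k+1}^n a_{n-j}^n b_{j-k}^j\ge 0$. The only cosmetic difference is that the paper fixes $n$ and inducts downward on $k$, whereas you reindex via $m=n-k$ and induct on $m$ uniformly over all rows; these are the same argument.
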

\begin{proof}
Let  $A=(a_{n-j}^n)$ and $B=A^{(-1)}=(b_{n-j}^n)$. 
Then, it is clear that $a_{0}^n=1/b_0^n>0$. 

For fixed $n$, suppose the claim is true for $j\ge k+1$ where $k\le n-1$.  
Then, for $j=k$, one has
\[
\sum_{j=k}^n a_{n-j}^n b_{j-k}^j=0.
\]
Then,
\[
a_{n-k}^nb_0^k=\sum_{j=k+1}^n a_{n-j}^n (-b_{j-k}^j)\ge 0,
\]
where $a_{n-j}^n\ge 0$ is due to induction hypothesis and $(-b_{j-k}^j)\ge 0$ for $j\ge k+1$ is due to the condition given.
 The claim is then proved.
\end{proof}

Next, we define the pseudo-convolution between a kernel and a vector.
We consider 
\[
V=\{x=(x_1, x_2,\cdots)^T:\quad x_i\in \R \}.
\]
 Define the pseudo-convolution $\pc$: $K\times V\to V$,
\begin{gather}
y=A\pc x
\end{gather}
by
\begin{gather}
y_{n}=\sum_{j=1}^{n}a_{n-j}^n x_{j}.
\end{gather}
\begin{remark}
Here, the index of $x$ starts with $i=1$ instead of $i=0$. This convention is adapted to the fact that there are only $n-1$ ``$a_{n-j}^n$'' elements for fixed $n$. This is also convenient for the implicit scheme \eqref{eq:integraldis}. If $a_{n-j}^n\equiv a_{n-j}$, we could understand $x$ as a kernel $x_j=b_{j-1}^{j}\equiv b_{j-1}$. Then $A\pc x$ reduces to the usual convolution. 
\end{remark}

It holds that
\begin{lemma}
$A\pc(B\pc x)=(A\pc B)\pc x$.
\end{lemma}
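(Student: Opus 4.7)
The plan is to prove this by a direct Fubini-style interchange of summations, in complete parallel to the associativity already established for the triple product $(A\pc B)\pc C = A\pc(B\pc C)$. The definitions for both sides of $A\pc(B\pc x)=(A\pc B)\pc x$ are explicit finite sums, so no analytic subtlety arises; the only task is to track indices carefully.

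First I would unfold the inner operation, writing $(B\pc x)_j=\sum_{\ell=1}^{j} b_{j-\ell}^{j}x_{\ell}$, and then apply $A$ to get
\[
[A\pc(B\pc x)]_{n}=\sum_{j=1}^{n}a_{n-j}^{n}\sum_{\ell=1}^{j}b_{j-\ell}^{j}x_{\ell}.
\]
Next I would swap the order of summation, using that the pair $(j,\ell)$ satisfies $1\le\ell\le j\le n$ on both sides, to rewrite this as
\[
\sum_{\ell=1}^{n}\Bigl(\sum_{j=\ell}^{n}a_{n-j}^{n}b_{j-\ell}^{j}\Bigr)x_{\ell}.
\]
The inner sum is exactly $(A\pc B)_{n-\ell}^{n}$ by the second form of the definition in \eqref{eq:convnonuni}, so the expression equals $\sum_{\ell=1}^{n}(A\pc B)_{n-\ell}^{n}x_{\ell}=[(A\pc B)\pc x]_{n}$, which gives the desired identity for every $n$.

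There is essentially no obstacle here; the only thing to watch is that the vector indexing starts at $\ell=1$ rather than $\ell=0$, as noted in the remark preceding the lemma, so one must be sure that the index range $\ell=1,\ldots,n$ is used consistently when reorganizing the double sum. The argument is, up to the indexing shift, a transcription of the calculation already carried out for $(A\pc B)\pc C$.
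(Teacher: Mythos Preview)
Your proof is correct and is essentially identical to the paper's own proof: both unfold $(B\pc x)_j$, apply $A$, interchange the finite sums over $1\le \ell\le j\le n$, and identify the inner sum as $(A\pc B)_{n-\ell}^n$. The only difference is that you add a remark about the index shift, which the paper omits.
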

\begin{proof}
By the definition, one has
\[
\sum_{j=1}^n a_{n-j}^n (B\pc x)_{j}
=\sum_{j=1}^n a_{n-j}^n \sum_{\ell=1}^j b_{j-\ell}^j x_{\ell}
=\sum_{\ell=1}^n(\sum_{j=\ell}^n a_{n-j}^n b_{j-\ell}^j)x_{\ell}
=\sum_{\ell=1}^n(A\pc B)_{n-\ell}^nx_{\ell}.
\]
This then verifies the claim.
\end{proof}

\section{The complete positive kernels for nonuniform meshes}\label{sec:cpnonuni}

In this section, we explore the generalization of complete positivity to nonuniform meshes. 
The following kernel and its inverse will play important roles below.
\begin{gather}
L=
\begin{bmatrix}
1 &  &  &  &  \\
1& 1 & & &  \\
 \vdots & \vdots &  \vdots&  &  \\
1&1 & \cdots & 1 &  \\
 \vdots&\vdots  &\vdots  &\vdots  &\vdots  \\
\end{bmatrix},
\quad 
L^{(-1)}=
\begin{bmatrix}
1 &  &  &  &  \\
-1& 1 & & &  \\
 & -1 &  1&  &  \\
& & \vdots & 1 &  \\
&  &  &\cdots  &\vdots  \\
\end{bmatrix}.
\end{gather}
\begin{definition}
For a given $A$, the kernel $C_R$ with $A\pc C_R=L$ is called the right complementary kernel. The kernel $C_L$ with $C_L\pc A=L$ is called the left complementary kernel.
\end{definition}
The kernel $C_R$ is in fact the so-called ``right convolutional complementary'' (RCC) kernel in \cite{liao2023discrete} and $C_L$ is in fact the ``discrete convolutional complementary'' (DCC) kernel in \cite{liao2019discrete}.  

The following lemma is clear and we omit the proof.
\begin{lemma}\label{lmm:complementaryprop}
Let $A$ be a kernel that is invertible. Then,  $C_R=A^{(-1)}\pc L$ and $C_L=L\pc A^{(-1)}$.
\end{lemma}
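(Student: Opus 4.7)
The plan is to derive both identities directly from the defining relations of $C_R$ and $C_L$ by using associativity of the pseudo-convolution together with the two-sided inverse property of $A^{(-1)}$ established in Proposition \ref{pro:inv}. No delicate calculation is required; the argument is essentially the same manipulation one would do in a noncommutative ring once one knows that $A^{(-1)}\pc A=A\pc A^{(-1)}=I$.

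First I would handle $C_R$. By definition $A\pc C_R=L$. I would pseudo-convolve both sides on the left with $A^{(-1)}$ to get $A^{(-1)}\pc(A\pc C_R)=A^{(-1)}\pc L$. Applying the associative law (property (ii) of the pseudo-convolution lemma) rewrites the left-hand side as $(A^{(-1)}\pc A)\pc C_R$, and since Proposition \ref{pro:inv} gives $A^{(-1)}\pc A=I$ together with property (iii) that $I$ acts as the identity on the left, the left-hand side collapses to $C_R$. This yields $C_R=A^{(-1)}\pc L$.

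Next I would treat $C_L$ symmetrically. Starting from $C_L\pc A=L$, I would pseudo-convolve both sides on the right with $A^{(-1)}$ to obtain $(C_L\pc A)\pc A^{(-1)}=L\pc A^{(-1)}$. Again by associativity this becomes $C_L\pc(A\pc A^{(-1)})$, and Proposition \ref{pro:inv} gives $A\pc A^{(-1)}=I$, while property (iii) ensures $C_L\pc I=C_L$. Hence $C_L=L\pc A^{(-1)}$.

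There is no real obstacle here: the only subtlety worth flagging is that one genuinely needs the two-sidedness of the inverse from Proposition \ref{pro:inv}, because the pseudo-convolution is not commutative, so one cannot swap $A^{(-1)}\pc A$ with $A\pc A^{(-1)}$ a priori. Since both equalities have been established, the proof is a short two-line computation in each case, which justifies omitting it as the excerpt does.
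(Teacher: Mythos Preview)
Your proposal is correct and is exactly the argument the paper has in mind; indeed, the paper simply declares the lemma ``clear'' and omits the proof, and your short manipulation via associativity and the two-sided inverse from Proposition~\ref{pro:inv} is the natural justification.
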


Next, we consider the resolvent kernels for nonuniform meshes. 
\begin{lemma}\label{lmm:Rbasics}
Suppose the diagonal elements of $A$ are positive and its right complementary kernel is $C_R$. Then, the resolvent $R_{\lambda}$ defined by
\begin{gather}\label{eq:disreldef}
R_{\lambda}+\lambda R_{\lambda}\pc A=\lambda A
\Leftrightarrow
A-R_{\lambda}\pc A=\frac{1}{\lambda}R_{\lambda}
\end{gather}
 always exists for $\lambda>0$.
Moreover, the following holds:
\begin{enumerate}[(a)]
\item $R_{\lambda}\pc A=A\pc R_{\lambda}$, $R_{\lambda}\pc A^{(-1)}=A^{(-1)}\pc R_{\lambda}$;
\item $I-R_{\lambda}=(I+\lambda A)^{(-1)}=\lambda^{-1} R_{\lambda}\pc A^{(-1)}$;
\item The right complementary kernel of $R_{\lambda}$ is $\lambda^{-1}C_R+L$, namely
$R_{\lambda}\pc (\lambda^{-1}C_R+L)=L$.
\end{enumerate}
\end{lemma}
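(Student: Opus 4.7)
The plan is to treat everything as algebraic identities under $\bar{*}$, using Proposition \ref{pro:inv} (inverses under $\bar{*}$ are two-sided) as the main tool, so the four claims collapse into manipulations of $I+\lambda A$ and $A^{(-1)}$.

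First, I would establish existence by rewriting the defining relation \eqref{eq:disreldef} as $R_\lambda \bar{*}(I+\lambda A)=\lambda A$. Since the diagonal entries of $A$ are positive, those of $I+\lambda A$ are $1+\lambda a_0^n>0$ for $\lambda>0$, so $I+\lambda A$ is invertible by Proposition \ref{pro:inv}, and $R_\lambda = \lambda A \bar{*}(I+\lambda A)^{(-1)}$ is well-defined and unique.

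Next I would prove part (b). A direct expansion gives
\[
(I-R_\lambda)\bar{*}(I+\lambda A)=I+\lambda A-R_\lambda-\lambda R_\lambda\bar{*}A=I,
\]
where the last equality uses the defining equation for $R_\lambda$. Hence $I-R_\lambda=(I+\lambda A)^{(-1)}$. Because $\bar{*}$-inverses are two-sided (Proposition \ref{pro:inv}), we also have $(I+\lambda A)\bar{*}(I-R_\lambda)=I$, which expands to $\lambda A = R_\lambda + \lambda A\bar{*}R_\lambda$. Comparing with the original identity $\lambda A=R_\lambda+\lambda R_\lambda\bar{*}A$ yields $A\bar{*}R_\lambda=R_\lambda\bar{*}A$, giving the first half of (a). The second half of (a) then follows by sandwiching: apply $A^{(-1)}$ on both sides of $A\bar{*}R_\lambda=R_\lambda\bar{*}A$ to conclude $A^{(-1)}\bar{*}R_\lambda=R_\lambda\bar{*}A^{(-1)}$. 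Finally, right-multiplying the identity $R_\lambda=\lambda A-\lambda R_\lambda\bar{*}A$ by $A^{(-1)}$ yields $R_\lambda\bar{*}A^{(-1)}=\lambda(I-R_\lambda)$, completing (b).

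For (c), I would use Lemma \ref{lmm:complementaryprop} to write $C_R=A^{(-1)}\bar{*}L$. Then
\[
R_\lambda\bar{*}\bigl(\lambda^{-1}C_R+L\bigr)
=\lambda^{-1}R_\lambda\bar{*}A^{(-1)}\bar{*}L+R_\lambda\bar{*}L
=(I-R_\lambda)\bar{*}L+R_\lambda\bar{*}L=L,
\]
where the middle equality invokes the identity $R_\lambda\bar{*}A^{(-1)}=\lambda(I-R_\lambda)$ from (b). This identifies $\lambda^{-1}C_R+L$ as the right complementary kernel of $R_\lambda$.

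The only subtle point is the commutativity statement in (a): since $\bar{*}$ is not commutative in general, $A\bar{*}R_\lambda=R_\lambda\bar{*}A$ is not automatic and genuinely relies on the two-sidedness of the inverse from Proposition \ref{pro:inv}; once that is secured, everything else is bookkeeping. I would flag this as the step to justify carefully, while the remaining identities reduce to one-line associative rearrangements.
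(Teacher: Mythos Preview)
Your proposal is correct and follows essentially the same approach as the paper: both reduce existence and part (a) to the two-sidedness of the $\bar{*}$-inverse of $I+\lambda A$ (Proposition \ref{pro:inv}), and both handle (b) and (c) by the same algebraic rearrangements. The only cosmetic difference is in (c), where the paper computes $R_\lambda^{(-1)}=I+\lambda^{-1}A^{(-1)}$ and then right-convolves with $L$, whereas you verify $R_\lambda\bar{*}(\lambda^{-1}C_R+L)=L$ directly using the identity from (b); these are the same computation read in opposite directions.
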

\begin{proof}
The relation \eqref{eq:disreldef} is equivalent to 
\[
(I-R_{\lambda})\pc (I+\lambda A)=I.
\]
The existence of $R_{\lambda}$ follows by the fact that the diagonal elements of $I+\lambda A$ are nonzero. 
Moreover, by Proposition \ref{pro:inv}, 
\[
 (I+\lambda A)\pc (I-R_{\lambda})=I.
\]
This then implies that $R_{\lambda}\pc A=A\pc R_{\lambda}$, which immediately implies that $R_{\lambda}\pc A^{(-1)}=A^{(-1)}\pc R_{\lambda}$.
The assertion in (b) is straightforward. 

For the last assertion, using the relation $R_{\lambda}+\lambda R_{\lambda}\pc A=\lambda A$, one has
\[
R_{\lambda}^{(-1)}= I+\lambda^{-1} A^{(-1)}.
\]
Convolving $L$ on the right gives the result. 
\end{proof}

The following describes the asymptotic behavior of the resolvents, which could be insightful. The intuition comes from the simple relation for real numbers $(1+\lambda a)^{-1}(\lambda a)=1-\lambda^{-1}a^{-1}+O(\lambda^{-2})$.
\begin{lemma}\label{lmm:resolvasym}
Suppose that $A$ is invertible. The resolvent $R_{\lambda}$ satisfies the following as $\lambda\to \infty$:
\[
R_{\lambda}=I-\lambda^{-1}A^{(-1)}+O(\lambda^{-2}).
\]
The $O(\lambda^{-2})$ is elementwise under the limit $\lambda\to +\infty$.
\end{lemma}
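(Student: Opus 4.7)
The plan is to use Lemma \ref{lmm:Rbasics}(b), which identifies $I - R_\lambda = (I + \lambda A)^{(-1)}$, so that the statement is equivalent to
\begin{equation*}
(I + \lambda A)^{(-1)} = \lambda^{-1} A^{(-1)} + O(\lambda^{-2})
\end{equation*}
elementwise. The strategy is to invert $I + \lambda A$ via a clean algebraic factorization that isolates the leading-order term $\lambda^{-1} A^{(-1)}$, then control the single remaining correction factor by an elementary induction.

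First I would note the identity $(I + \lambda A) \pc (\lambda^{-1} A^{(-1)}) = I + \lambda^{-1} A^{(-1)}$, which uses only $A \pc A^{(-1)} = I$ from Proposition \ref{pro:inv}. Rearranging yields the factorization
\begin{equation*}
(I + \lambda A)^{(-1)} = \lambda^{-1} A^{(-1)} \pc Y_\lambda, \qquad Y_\lambda := (I + \lambda^{-1} A^{(-1)})^{(-1)},
\end{equation*}
so that
\begin{equation*}
R_\lambda - I + \lambda^{-1} A^{(-1)} = \lambda^{-1} A^{(-1)} \pc (I - Y_\lambda).
\end{equation*}

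Next I would establish that $Y_\lambda = I + O(\lambda^{-1})$ elementwise. From $Y_\lambda \pc (I + \lambda^{-1} A^{(-1)}) = I$, reading off entry $(n,k)$ gives
\begin{equation*}
(Y_\lambda)_{n-k}^n \bigl(1 + \lambda^{-1}(A^{(-1)})_0^k\bigr) = \delta_{nk} - \lambda^{-1} \sum_{j=k+1}^n (Y_\lambda)_{n-j}^n (A^{(-1)})_{j-k}^j .
\end{equation*}
For $n = k$ this immediately yields $(Y_\lambda)_0^n = 1 + O(\lambda^{-1})$. An induction on $n - k$ then shows $(Y_\lambda)_{n-k}^n = O(\lambda^{-1})$ for $n > k$, since the right-hand side carries an explicit factor $\lambda^{-1}$ and the summands are bounded by the induction hypothesis. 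In particular, every entry of $I - Y_\lambda$ is $O(\lambda^{-1})$.

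Finally, since the pseudo-convolution at any fixed position $(n,k)$ is a finite sum indexed by $j \in [k, n]$, combining $\lambda^{-1} A^{(-1)} = O(\lambda^{-1})$ with the bound $I - Y_\lambda = O(\lambda^{-1})$ gives $\lambda^{-1} A^{(-1)} \pc (I - Y_\lambda) = O(\lambda^{-2})$ elementwise, which is the desired conclusion. The main obstacle is the step-two induction for $Y_\lambda$, but because the pseudo-convolution structure only couples finitely many indices at each fixed $(n,k)$, no analytic estimates are needed beyond $1/(1 + \lambda^{-1}(A^{(-1)})_0^k) \to 1$.
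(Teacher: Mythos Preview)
Your proof is correct and essentially the same as the paper's: both manipulate the resolvent identity $(I-R_\lambda)\pc(I+\lambda A)=I$ into a lower-triangular system with diagonal coefficients $1+\lambda^{-1}(A^{(-1)})_0^k$ and then read off the $O(\lambda^{-2})$ bound by induction on $n-k$. The only cosmetic difference is that the paper defines the error $N:=R_\lambda-(I-\lambda^{-1}A^{(-1)})$ directly and derives $N+\lambda^{-1}N\pc A^{(-1)}=\lambda^{-2}A^{(-1)}\pc A^{(-1)}$, whereas you factor through the auxiliary kernel $Y_\lambda=(I+\lambda^{-1}A^{(-1)})^{(-1)}$ and show $Y_\lambda=I+O(\lambda^{-1})$; the two routes are equivalent since $N=\lambda^{-1}A^{(-1)}\pc(I-Y_\lambda)$.
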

\begin{proof}
Let $N=R_{\lambda}-(I-\lambda^{-1}A^{(-1)})$. Then,
\[
I=(I-R_{\lambda})\pc (I+\lambda A)=(\lambda^{-1}A^{-1}-N)\pc (I+\lambda A).
\]
This gives
\[
N+\lambda^{-1}N\pc A^{(-1)}=\lambda^{-2}A^{(-1)}\pc A^{(-1)}.
\]
We can then solve elements of $N$ for $n=1,2, \cdots$ to see that 
each element is indeed $N_{n-j}^n=O(\lambda^{-2})$.
\end{proof}

Similar to the time continuous case and the case for uniform meshes, we define the following.
\begin{definition}
We say a kernel $A$ is a completely positive kernel if 
\begin{gather}\label{eq:Rproperty}
0<(R_{\lambda})_0^n<1, \quad  (R_{\lambda})_{n-j}^n\ge 0
\end{gather} 
and 
\begin{gather}
\sum_{j=1}^n (R_{\lambda})_{n-j}^n\le 1
\end{gather} 
for all $\lambda>0$. 
\end{definition}

Similar to Theorem \ref{thm:discretecpuni}, one has the following observation.
\begin{theorem}\label{thm:cpnonuniform}
An array kernel $A$ is a completely positive kernel if and only if its pseudo-convolutional inverse $B=A^{(-1)}=(b_{n-j}^n)$
satisfies the following conditions
\begin{gather}\label{eq:propertyB}
\begin{split}
& b_0^n>0, \quad b_{n-j}^n\le 0, \quad \forall j<n,n\ge 1\\
& \sum_{j=1}^n b_{n-j}^n \ge 0, \quad \forall n\ge 1.
\end{split}
\end{gather}
\end{theorem}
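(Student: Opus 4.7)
My plan is to mirror the proof of Theorem \ref{thm:discretecpuni} using the pseudo-convolutional machinery of Sections \ref{sec:pc} and \ref{sec:cpnonuni}. The two directions will rest on different tools: Lemma \ref{lmm:resolvasym} (asymptotic expansion of $R_\lambda$) for the necessity part, and Lemma \ref{lmm:signofentry} together with Lemma \ref{lmm:Rbasics}(c) for the sufficiency part.

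For the ``$\Rightarrow$'' direction, the formula $(R_\lambda)_0^n=\lambda a_0^n/(1+\lambda a_0^n)$ (obtained from \eqref{eq:disreldef}) combined with $0<(R_\lambda)_0^n<1$ forces $a_0^n>0$, so $A$ is invertible and Lemma \ref{lmm:resolvasym} yields
\[
b_{n-j}^n = \lim_{\lambda\to\infty}\lambda\,(I-R_\lambda)_{n-j}^n
\]
elementwise. The diagonal case gives $b_0^n=1/a_0^n>0$, the off-diagonal case gives $b_{n-j}^n=-\lim_{\lambda\to\infty}\lambda(R_\lambda)_{n-j}^n\le 0$ for $j<n$, and summing in $j$ from $1$ to $n$ (the diagonal term of $I$ contributes $1$) yields
\[
\sum_{j=1}^n b_{n-j}^n=\lim_{\lambda\to\infty}\lambda\Big(1-\sum_{j=1}^n(R_\lambda)_{n-j}^n\Big)\ge 0
\]
by the sum hypothesis on $R_\lambda$.

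For the ``$\Leftarrow$'' direction, Lemma \ref{lmm:Rbasics}(b) gives $R_\lambda^{(-1)}=I+\lambda^{-1}B$. Under \eqref{eq:propertyB}, this kernel has positive diagonal $1+\lambda^{-1}b_0^n$ and nonpositive off-diagonal entries $\lambda^{-1}b_{n-j}^n$, so Lemma \ref{lmm:signofentry} applies and yields $(R_\lambda)_{n-j}^n\ge 0$ with positive diagonal; the explicit formula above shows $(R_\lambda)_0^n<1$. For the sum bound I invoke Lemma \ref{lmm:Rbasics}(c): $R_\lambda\pc L = L-\lambda^{-1}R_\lambda\pc C_R$, where by Lemma \ref{lmm:complementaryprop} $C_R=B\pc L$. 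A direct expansion gives $(C_R)_{n-k}^n=\sum_{i=0}^{n-k}b_i^n$; since the terms $b_i^n$ with $i\ge 1$ are nonpositive, truncating the sum earlier can only increase it, so
\[
(C_R)_{n-k}^n\ge (C_R)_{n-1}^n=\sum_{i=0}^{n-1}b_i^n\ge 0.
\]
Hence $R_\lambda\pc C_R\ge 0$ entrywise, and reading off the $(n-1,n)$-entry of the identity $R_\lambda\pc L=L-\lambda^{-1}R_\lambda\pc C_R$ gives $\sum_{j=1}^n(R_\lambda)_{n-j}^n=1-\lambda^{-1}(R_\lambda\pc C_R)_{n-1}^n\le 1$, closing the argument.

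The main subtlety is the last step: one has to pass from the \emph{single} partial-sum inequality $\sum_{i=0}^{n-1}b_i^n\ge 0$ in \eqref{eq:propertyB} to nonnegativity of \emph{all} truncated sums $(C_R)_{n-k}^n$, and then recognize the summed resolvent as the $(n-1,n)$-entry of $R_\lambda\pc L$ via the complementary-kernel identity. Everything else is book-keeping against the algebraic identities already proved in Sections \ref{sec:pc}--\ref{sec:cpnonuni}.
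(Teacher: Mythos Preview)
Your proof is correct and follows essentially the same route as the paper's: the ``$\Rightarrow$'' direction via the asymptotic expansion $B=\lim_{\lambda\to\infty}\lambda(I-R_\lambda)$ of Lemma~\ref{lmm:resolvasym}, and the ``$\Leftarrow$'' direction via Lemma~\ref{lmm:signofentry} applied to $R_\lambda^{(-1)}=I+\lambda^{-1}B$ together with the complementary-kernel identity $L-R_\lambda\pc L=\lambda^{-1}R_\lambda\pc C_R$. You even supply two details the paper leaves implicit: the argument that $0<(R_\lambda)_0^n<1$ forces $a_0^n>0$ (so that $A^{(-1)}$ exists before Lemma~\ref{lmm:resolvasym} is invoked), and the monotonicity observation that the single hypothesis $\sum_{i=0}^{n-1}b_i^n\ge 0$ propagates to all truncated sums $(C_R)_{n-k}^n=\sum_{i=0}^{n-k}b_i^n$ because the $b_i^n$ with $i\ge 1$ are nonpositive.
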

\begin{proof}
For the ``$\Leftarrow$'' direction , since $R_{\lambda}^{(-1)}= I+\lambda^{-1} A^{(-1)}$, and $b_{n-j}^n\le 0$ for $j<n$,
then $R_{\lambda}^{(-1)}$ has positve diagonal elements and nonpositive off-diagonal elements. 
By Lemma \ref{lmm:signofentry},  $R_{\lambda}$ is nonnegative. 

Moreover, since $I-R_{\lambda}=\lambda^{-1}R_{\lambda}\pc B$, one then finds that 
\[
L-R_{\lambda}\pc L=\lambda^{-1}R_{\lambda}\pc C_R.
\]
Note that $C_R$ has nonnegative entries by the property that $\sum_{j=1}^n b_{n-j}^n \ge 0$, which then implies that $L-R_{\lambda}\pc L$ has nonnegative entries, or in other words
\[
1-\sum_{j=1}^n (R_{\lambda})_{n-j}^n \ge 0.
\]

For the ``$\Rightarrow$'' direction, using Lemma \ref{lmm:resolvasym}, one finds that 
\[
B=\lim_{\lambda\to\infty} \lambda (I-R_{\lambda}).
\]
Hence, for $j<n$, one has 
\[
b_{n-j}^n=\lim_{\lambda\to \infty} -\lambda (R_{\lambda})_{n-j}^n \le 0.
\]
The fact $b_0^n>0$ is clear. 
Moreover, $C_R=A^{(-1)}\pc L=\lim_{\lambda\to\infty} \lambda (I-R_{\lambda})\pc L$, then $\sum_{j=1}^n (R_{\lambda})_{n-j}^n\le 1$ implies that the entries of $C_R$ are nonnegative, or
\[
\sum_{j=1}^n b_{n-j}^n \ge 0.
\]
\end{proof}

\section{Application to L1 scheme for fractional differential equations}\label{sec:application}

In this section, we look at one illustrating example for how the theory above could be used. In particular, we look at the L1 discretization on nonuniform meshes for the time fractional differential equations and establish a discrete analogue of 
\cite[Theorem 5]{clement1979abstract}. 

Consider the following time fractional differential equations
\begin{gather}\label{eq:fracode}
D_c^{\alpha}u \in -\cA(u), \quad u(0)=u_0,
\end{gather}
where $\cA: \cX\to \cX$ is m-accretive for some Banach space $\cX$. This means that
\begin{itemize}
\item for any $x_1, x_2\in \cX$, $y_1\in \cA(x_1)$ and $y_2\in \cA(x_2)$, one has
\[
\langle y_1-y_2, w\rangle\ge 0, \forall w\in J(x_1-x_2),
\]
where $J: \cX\to \cX'$ is the dual map ($\forall y\in J(x)$, $\|y\|=\|x\|$, $\langle x, y\rangle=\|x\|^2$).
\item $R(I+\cA)=\cX$ (the range of $I+\cA$ is full).
\end{itemize}
The Caputo derivative is defined by
\begin{gather}\label{eq:captra}
D_c^{\alpha}u=\frac{1}{\Gamma(1-\alpha)}\int_0^t\frac{u'(s)}{(t-s)^{\alpha}}ds.
\end{gather}
The fractional differential equation \eqref{eq:fracode} is equivalent to the integral equation (see \cite{diethelm10} and also \cite{liliu18frac,liliu2018compact} for generalized versions)
\begin{gather}\label{eq:fracint}
u_0\in u(t)+g_{\alpha}*\cA(u(\cdot)),
\end{gather}
where
\begin{gather}\label{eq:kernelfode}
g_{\alpha}(t):=\frac{1}{\Gamma(\alpha)}t_+^{\alpha-1}.
\end{gather}
Direct computation verifies that the kernel is completely monotone, and thus completely positive.

For discretization, suppose that the computational time interval is $[0, T]$. Let $0=t_0<t_1<t_2<\cdots<t_N=T$ be the grid points.  We define
\begin{gather}
\tau_n:=t_n-t_{n-1}, \quad n\ge 1.
\end{gather}
Let $u_n$ be the numerical solution at $t_n$ and denote
\[
\nabla_{\tau}u_n=u_{n}-u_{n-1},\quad n\ge 1.
\]
The L1 scheme \cite{lin2007finite,stynes2017error} can be reformulated as
\begin{gather}\label{eq:L1a}
D^{\alpha}u(t_n) \approx \cD_{\tau}^{\alpha}u_n:=C\pc \nabla_{\tau} u_n=C\pc L^{(-1)}\pc (u-u_0)_n,
\end{gather}
where
\begin{gather}\label{eq:L1b}
c_{n-j}^n=\frac{1}{\tau_j\Gamma(1-\alpha)}\int_{t_{j-1}}^{t_j}(t_n-s)^{-\alpha}\,ds.
\end{gather}
Thus, the discrete scheme is given by
\begin{gather}\label{eq:discrete}
\cD_{\tau}^{\alpha}u_n \in -\cA(u_n).
\end{gather}
It can then be verified easily that $B:=C*L^{(-1)}$ satisfies \eqref{eq:propertyB}. Hence, it corresponds to a completely positive kernel on nonuniform mesh $A=(a_{n-j}^n)=B^{(-1)}$.  In other words, \eqref{eq:discrete} can be converted into
\begin{gather}\label{eq:integraldis}
u_0\in u_n+\sum_{j=1}^n a_{n-j}^n \cA(u_j).
\end{gather}
Here, $a_{n-j}^n$ is like the inegral of $g_{1-\alpha}(t_n-s)$ on $(t_{j-1}, t_j)$.

Following the standard argument, both the solutions to \eqref{eq:fracode} and \eqref{eq:integraldis} can be 
approximated by the Yosida approximation.  In particular, 
\begin{gather}
J_{\lambda}:=(I+\lambda \cA)^{-1}, \quad \cA_{\lambda}=\lambda^{-1}(I-J_{\lambda}).
\end{gather}
We consider then
\begin{gather}\label{eq:integralaprox}
u_0^{\lambda}=u_n^{\lambda}+\sum_{j=1}^n a_{n-j}^n \cA_{\lambda}(u_j^{\lambda}).
\end{gather}

Now, we establish a discrete analogue of \cite[Theorem 5]{clement1979abstract}. In particular, let $P\subset \cX$
be a closed convex cone such that
\begin{gather}\label{eq:Ppreserving}
J_{\lambda}P\subset P, \quad \forall \lambda>0.
\end{gather}
One has the following
\begin{proposition}
Suppose that $u_0\in P$ and \eqref{eq:Ppreserving} holds. Moreover, if $A$ is a completely positive kernel
(or $B$ satisfies \eqref{eq:propertyB}), then $u_n^{\lambda}\in P$ for all $n\ge 0$ and $\lambda>0$.
Consequently, the numerical solution $u_n\in P$.
\end{proposition}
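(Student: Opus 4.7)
The plan is to use the resolvent $R_{1/\lambda}$ of the completely positive kernel $A$ to invert the implicit scheme \eqref{eq:integralaprox} into an explicit recursion that expresses $u_n^{\lambda}$ as a nonnegative combination (with weights summing to at most one) of $u_0$ and $\{J_{\lambda}u_j^{\lambda}\}_{j=1}^n$, and then to propagate membership in $P$ by induction on $n$, the inductive step being closed by resolving the implicit appearance of $J_{\lambda}u_n^{\lambda}$ through the strict bound $(R_{1/\lambda})_0^n<1$.

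First, I would use $\cA_{\lambda}=\lambda^{-1}(I-J_{\lambda})$ to rewrite \eqref{eq:integralaprox} as $(I+\lambda^{-1}A)\pc u^{\lambda}=u_0\mathbf{1}+\lambda^{-1}A\pc J_{\lambda}u^{\lambda}$, where $\mathbf{1}=(1,1,\ldots)^T$ and $J_{\lambda}u^{\lambda}$ is understood componentwise. Applying $(I+\lambda^{-1}A)^{(-1)}=I-R_{1/\lambda}$ from Lemma \ref{lmm:Rbasics}(b) on the left, together with the identity $(I-R_{1/\lambda})\pc\lambda^{-1}A=R_{1/\lambda}$ that follows directly from the resolvent relation \eqref{eq:disreldef}, yields
\[
u_n^{\lambda}=\Bigl(1-\sum_{j=1}^n (R_{1/\lambda})_{n-j}^n\Bigr)u_0+\sum_{j=1}^n (R_{1/\lambda})_{n-j}^n\, J_{\lambda}u_j^{\lambda}.
\]
Complete positivity of $A$ guarantees $(R_{1/\lambda})_{n-j}^n\ge 0$ with $\sum_{j=1}^n (R_{1/\lambda})_{n-j}^n\le 1$, and a direct computation on the diagonal gives $\gamma:=(R_{1/\lambda})_0^n=a_0^n/(\lambda+a_0^n)\in(0,1)$.

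The induction on $n$ now proceeds as follows. The base case $u_0\in P$ is given. Assume $u_j^{\lambda}\in P$ for all $j\le n-1$; then \eqref{eq:Ppreserving} gives $J_{\lambda}u_j^{\lambda}\in P$ for such $j$. Splitting off the $j=n$ term from the recursion above produces $u_n^{\lambda}=V+\gamma J_{\lambda}u_n^{\lambda}$ where $V$ is a nonnegative combination of $u_0$ and $\{J_{\lambda}u_j^{\lambda}\}_{j<n}$, so $V\in P$ because $P$ is a closed convex cone. The main obstacle is the implicit appearance of $J_{\lambda}u_n^{\lambda}$; to remove it, set $z=J_{\lambda}u_n^{\lambda}$ and pick $y\in\cA z$ with $u_n^{\lambda}=z+\lambda y$. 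Substitution gives $(1-\gamma)z+\lambda y=V$, equivalently $z+\tfrac{\lambda}{1-\gamma}y=\tfrac{V}{1-\gamma}$, which is precisely the statement that $z=J_{\lambda/(1-\gamma)}\bigl(V/(1-\gamma)\bigr)$. Since $V/(1-\gamma)\in P$ and $\lambda/(1-\gamma)>0$, invoking \eqref{eq:Ppreserving} at the parameter $\lambda/(1-\gamma)$ delivers $z\in P$, hence $u_n^{\lambda}=V+\gamma z\in P$, completing the induction. Finally, passing to the limit via the standard Yosida convergence $u_n^{\lambda}\to u_n$ as $\lambda\to 0^+$ and using the closedness of $P$ yields $u_n\in P$.
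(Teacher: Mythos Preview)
Your argument is correct and shares the paper's overall architecture: rewrite \eqref{eq:integralaprox} via $\cA_{\lambda}=\lambda^{-1}(I-J_{\lambda})$, apply $I-R_{1/\lambda}$ to obtain
\[
u_n^{\lambda}=\Bigl(1-\sum_{j=1}^n (R_{1/\lambda})_{n-j}^n\Bigr)u_0+\sum_{j=1}^n (R_{1/\lambda})_{n-j}^n\, J_{\lambda}u_j^{\lambda},
\]
and induct on $n$. The divergence is in how you close the implicit step $u_n^{\lambda}=V+\gamma J_{\lambda}u_n^{\lambda}$ with $V\in P$ and $\gamma=(R_{1/\lambda})_0^n\in(0,1)$. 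The paper sets up the iteration $w^{k+1}=V+\gamma J_{\lambda}w^k$, observes it is a strict contraction (since $J_{\lambda}$ is nonexpansive and $\gamma<1$), and concludes that the unique fixed point $u_n^{\lambda}$ lies in $P$ because each iterate does and $P$ is closed; this incidentally re-establishes existence of $u_n^{\lambda}$. You instead solve the implicit equation algebraically: writing $z=J_{\lambda}u_n^{\lambda}$ and $u_n^{\lambda}=z+\lambda y$ with $y\in\cA z$, the relation becomes $z+\tfrac{\lambda}{1-\gamma}y=\tfrac{V}{1-\gamma}$, so by single-valuedness of resolvents of accretive operators $z=J_{\lambda/(1-\gamma)}\bigl(V/(1-\gamma)\bigr)\in P$ via \eqref{eq:Ppreserving} at the shifted parameter $\lambda/(1-\gamma)$. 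Your route is shorter and exploits the full strength of the hypothesis $J_{\mu}P\subset P$ for \emph{all} $\mu>0$ (not just the fixed $\lambda$), whereas the paper's contraction argument uses only $J_{\lambda}P\subset P$ together with nonexpansiveness, and has the side benefit of furnishing existence of $u_n^{\lambda}$ rather than presupposing it.
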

\begin{proof}
We only need to consider the approximation \eqref{eq:integralaprox} and show that $u_n^{\lambda}\in P$.
Rewrite \eqref{eq:integralaprox} as
\[
u_0^{\lambda}+\lambda^{-1}A\pc J_{\lambda}(u_n^{\lambda})
=u_n^{\lambda}+\lambda^{-1}A\pc u_n^{\lambda}.
\]
Taking pseudo-convolution on both sides with $I-R_{\lambda^{-1}}$, noting by Lemma \ref{lmm:Rbasics} that
\[
(I-R_{\lambda^{-1}})\pc (I+\lambda^{-1}A)=I,
\]
one has
\[
u_n^{\lambda}=(1-\sum_{j=1}^n (R_{\lambda^{-1}})_{n-j}^n) u_0^{\lambda}+R_{\lambda^{-1}}\pc J_{\lambda}(u_n^{\lambda}).
\]
Hence,
\[
\left(u_n^{\lambda}-(R_{\lambda^{-1}})_0^n J_{\lambda}(u_n^{\lambda})\right)
=\left(1-\sum_{j=1}^n (R_{\lambda^{-1}})_{n-j}^n\right) u_0^{\lambda}+\sum_{j=1}^{n-1}(R_{\lambda^{-1}})_{n-j}^n J_{\lambda}(u_j^{\lambda}).
\]

By induction, if $u_j^{\lambda}\in P$ for all $j<n$, then Theorem \ref{thm:cpnonuniform} implies that all the coefficients on the right hand side are nonnegative and $(R_{\lambda^{-1}})_0^n\in (0, 1)$. Consequently,
\[
\left(u_n^{\lambda}-(R_{\lambda^{-1}})_0^n J_{\lambda}(u_n^{\lambda})\right)=f \in P.
\]
To see that $u_n^{\lambda}\in P$, we consider the iteration
\[
w^{k+1}=f+(R_{\lambda^{-1}})_0^n J_{\lambda}(w^k), \quad w^0=u_{n-1}^{\lambda}.
\]
Since $(R_{\lambda^{-1}})_0^n\in (0, 1)$ and $J_{\lambda}$ is a contraction, the contraction mapping theorem ensures that $w^k$ converges to the unique fixed point $w_*$ and this must be $u_n^{\lambda}$.
On the other hand, for each $k$, $w^k\in P$ is clear so $u_n^{\lambda}=w_*\in P$
follows since $P$ is closed. The proof is then complete.
\end{proof}

As an application, one may consider the example modified from \cite[Example 1]{clement1979abstract}. Let $\Omega$
be a bounded domain with smooth boundary. Consider
\[
D_c^{\alpha}u= \Delta u\tcb{-}\beta(u).
\]
Here, $\beta(0)=0$ and $\beta(u)=F'(u)$ for some lower semi-continuous, proper convex function $F$. Suppose the initial data $u_0\in W_0^{1,2}(\Omega)$, $u_0\ge 0$ and
\[
\int_{\Omega} F(u_0(x))\,dx<\infty.
\]
Then, one can take $P=L_+^2(\Omega)$ (the set of square integrable functions that are nonnegative). Then,
all the assumptions hold so that the numerical solution to the L1 scheme satisfies $u_n \in L_+^2(\Omega)$. In other words, the numerial solution is nonnegative.

\section*{Acknowledgement}

This work was financially supported by the National Key R\&D Program of China, Project Number 2021YFA1002800 and 2020YFA0712000. The work of Y. Feng was partially sponsored by NSFC 12301283, Shanghai Sailing program 23YF1410300 and Science and Technology Commission of Shanghai Municipality (No. 22DZ2229014). The work of L. Li was partially supported by NSFC 12371400 and 12031013,  Shanghai Science and Technology Commission (Grant No. 21JC1403700, 20JC144100), the Strategic Priority Research Program of Chinese Academy of Sciences, Grant No. XDA25010403.

\bibliographystyle{plain}
\bibliography{frac}

\end{document}